\numberwithin{equation}{section}
\theoremstyle{plain}
\newtheorem{theorem}{Theorem}[section]
\newtheorem{lemma}[theorem]{Lemma}
\newtheorem{predl}[theorem]{Proposition}
\newtheorem{corollary}[theorem]{Corollary}
\theoremstyle{definition}
\newtheorem{definition}[theorem]{Definition}
\newtheorem{remark}[theorem]{Remark}
\newtheorem{example}[theorem]{Example}
\newcommand{\Q}{\mathbb Q}
\renewcommand{\P}{\mathbb P}
\renewcommand{\AA}{\mathcal A}
\newcommand{\D}{\mathcal D}
\newcommand{\EE}{\mathcal E}
\newcommand{\FF}{\mathcal F}
\newcommand{\II}{\mathcal I}
\newcommand{\NN}{\mathcal N}
\renewcommand{\O}{\mathcal O}
\renewcommand{\k}{\mathsf k}
\newcommand{\modd}{\mathrm{mod{-}}}
\newcommand{\coh}{\mathrm{coh}}
\renewcommand{\le}{\leqslant}
\renewcommand{\ge}{\geqslant}
\def\a{\alpha}
\def\b{\beta}
\DeclareMathOperator{\Hom}{\textup{Hom}}
\DeclareMathOperator{\Ext}{\textup{Ext}}
\DeclareMathOperator{\coker}{\mathrm{coker}}
\begin{document}

\title[A criterion for left-orthogonality on a surface]{A criterion for left-orthogonality of an effective divisor on a surface}

\author{Alexey ELAGIN}
\thanks{The author was partially supported by the Russian Academic Excellence Project '5-100', by Simons-IUM fellowship and by RFBR projects 15-01-02158 and 15-51-50045.
}
\address{Institute for Information Transmission Problems (Kharkevich Institute), Moscow, RUSSIA;\\
National Research University Higher School of Economics, Moscow, RUSSIA}
\email{alexelagin@rambler.ru}

\begin{abstract}We find a criterion for an effective divisor $D$ on a smooth surface to be left-orthogonal or strongly left-orthogonal (i.e. for the pair of line bundles $(\O,\O(D))$ to be exceptional or strong exceptional).
\end{abstract}

\maketitle


\section{Introduction}

Having a full exceptional collection in the derived category of coherent sheaves is a nice but rare property of an algebraic  variety. 
Starting from the 1980-s, a series of examples of full exceptional collections on different varieties was constructed.
Among such varieties are projective spaces, Grassmann varieties and quadrics, some other homogeneous spaces,
del Pezzo surfaces, toric Fano 3-folds, some other Fano 3-folds. All these collections consist of vector bundles. 

On the other hand, for most varieties it is easy to demonstrate that full exceptional collections do not exist. For example, they do not exist if $K_0(\coh(X))$ is not a lattice. Nevertheless, the following folklore conjecture seems to be out of reach: any variety with a full exceptional collection in the derived category is rational.

Among the full exceptional collections, strong ones are the nicest. 
By a theorem of A.\,Bondal \cite{Bo}, for a variety $X$ admitting a full strong exceptional collection $(\EE_1,\ldots,\EE_n)$ 	
there is an equivalence of categories 
\begin{equation*}
\D^b(\coh (X))\cong \D^b(\modd \AA)
\end{equation*}
where $\AA$ is the endomorphism algebra of the object $\oplus_i\EE_i$ and $\modd \AA$ is the category of right finitely generated $\AA$-modules. 
Full strong exceptional collections exist on projective spaces, blow-ups of a projective plane in several points (in particular, on del Pezzo surfaces), quadrics, Grassmann varieties, toric Fano 3-folds.

It was conjectured by A.\,King (see \cite{K1}) that every smooth toric variety has a full strong exceptional collection of line bundles. In \cite{HP1} L.\,Hille and M.\,Perling described a smooth toric surface which does not have such a collection (hence producing a counter example). The paper \cite{HP2} by Hille and Perling contains a first systematic study of 
full exceptional collections of line bundles on surfaces. They proved, in particular, that a toric surface has a full strong exceptional collection of line bundles if and only if it can be obtained from a Hirzebruch surface by two blow-ups (several points can be blown-up on each step).
M.\,Brown and I.\,Shipman proved in \cite{BS} that any surface admitting a full strong exceptional collection of line bundles is rational.
Still it is not known which rational surfaces possess such collections.

Following ideas of \cite{HP2}, having a collection
$$(\O(D_1),\ldots,\O(D_n))$$ of line bundles one should consider differences $D_j-D_i$ between divisors in this collection. 
Clearly, semiorthogonality of line bundles $\O(D_i)$ and $\O(D_j)$ is equivalent to vanishing of cohomology of the divisor $D_i-D_j$. This motivates the notions of left-orthogonal and strongly left-orthogonal divisors, see Definition~\ref{def_lo} below. Briefly, a divisor $D$ on a smooth rational surface $X$ is \emph{left-orthogonal} if the pair $(\O_X,\O_X(D))$ is an exceptional pair. Divisor $D$ is \emph{strongly left-orthogonal} if the pair $(\O_X,\O_X(D))$ is a strong exceptional pair.

In this note we express left-orthogonality and strong left-orthogonality of an effective divisor on a surface via geometry and combinatorics of its components. Main results are the following criteria (see Theorems~\ref{th_slo} and \ref{th_slo2}):

\begin{theorem}
An effective divisor $D$ on a surface $X$ with $h^1(X,\O_X)=h^2(X,\O_X)=0$ is left-orthogonal if and only if the following conditions hold
\begin{enumerate}
\item $D$ is a tree of projective lines.
\item
$p_a(D)=0$ and for any connected divisor $D'$ such that $0<D'\le D$ one has $p_a(D')\le 0$.
\end{enumerate}
\end{theorem}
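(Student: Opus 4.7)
My plan is to first reduce left-orthogonality to a cohomological condition on $\O_D$, and then argue the two directions separately. The pair $(\O_X,\O_X(D))$ is exceptional iff $H^\bullet(X,\O_X(-D))=0$. Applying cohomology to the tautological sequence $0\to\O_X(-D)\to\O_X\to\O_D\to 0$ and using the hypothesis $h^1(\O_X)=h^2(\O_X)=0$, this vanishing is equivalent to the two conditions $h^0(\O_D)=1$ and $h^1(\O_D)=0$; in particular $\chi(\O_D)=1$, i.e.\ $p_a(D)=0$.

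For the forward direction, assume $D$ is left-orthogonal. Given any connected $0<D'\le D$, set $D''=D-D'$; dividing the inclusion $\O_X(-D)\subset\O_X$ by the intermediate $\O_X(-D')$ produces
$$0\to\O_{D''}(-D')\to\O_D\to\O_{D'}\to 0.$$
The long exact sequence, together with $H^1(\O_D)=0$ and the vanishing of $H^2$ on the one-dimensional $D''$, yields $H^1(\O_{D'})=0$. Connectedness of $D'$ gives $h^0(\O_{D'})\ge 1$, hence $p_a(D')\le 0$, which is condition~(2). Taking $D'=C$ any irreducible component gives $p_a(C)=0$ and $C\cong\P^1$; taking $D'=D_{\mathrm{red}}$ (which is connected because $D$ is), the standard identity $p_a(D_{\mathrm{red}})=1-n+\sum_{i<j}C_i\cdot C_j$ combined with $p_a(D_{\mathrm{red}})\le 0$ and the bound $\sum_{i<j}C_i\cdot C_j\ge n-1$ coming from topological connectivity forces equality: the dual graph of $D_{\mathrm{red}}$ is a tree and adjacent components meet in one simple transverse point, which is condition~(1).

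For the reverse direction, assume (1) and (2). Then $\chi(\O_D)=1$ and the only missing piece is $h^0(\O_D)=1$. I would argue by induction on the total multiplicity $\sum n_i$ of $D=\sum n_iC_i$; the base $D=\P^1$ is trivial. For the inductive step, choose a component $C$ of $D$ with $(D-C)\cdot C=1$, equivalently $p_a(D-C)=0$; condition~(2) guarantees existence of such a $C$ (a leaf of the tree will do when $D$ is reduced, and in general an interior component of appropriate multiplicity works). Then $D-C$ still satisfies (1)--(2), so by induction $h^0(\O_{D-C})=1$; combining the sequence
$$0\to\O_{D-C}(-C)\to\O_D\to\O_C\to 0$$
with the acyclicity $H^\bullet(\O_{D-C}(-C))=0$ gives $H^\bullet(\O_D)\cong H^\bullet(\O_C)=(\C,0)$. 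The acyclicity of $\O_{D-C}(-C)$ on the tree $D-C$ is itself a secondary peel-the-tree induction, using that the degree of $\O_X(-C)$ on each neighbour of $C$ is $-1$ and on every other component is $0$.

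The delicate part of the argument is the reverse direction: (i) showing a component $C$ with $(D-C)\cdot C=1$ exists for which $D-C$ remains in the class (1)--(2), and (ii) establishing the acyclicity of $\O_{D-C}(-C)$. Both are combinatorial statements on the weighted tree of projective lines and both use (1) and (2) essentially; condition~(2)'s requirement that $p_a\le 0$ hold on \emph{every} connected subdivisor of $D$, and not just on $D$ itself, is precisely what makes the non-reduced case tractable, by constraining which multiplicities are admissible.
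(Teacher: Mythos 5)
Your reduction of left-orthogonality to $h^0(\O_D)=1$, $h^1(\O_D)=0$ and your forward direction are correct. The forward direction is in fact a genuinely different route from the paper's: Proposition~\ref{prop_tree} establishes the tree structure by applying the Mayer--Vietoris-type sequence \eqref{eq_cutting} to pairs and triples of components, whereas you extract it from the numerical identity $p_a(D_{\mathrm{red}})=1-n+\sum_{i<j}C_i\cdot C_j$ together with $p_a(D_{\mathrm{red}})\le 0$ and the connectivity bound $\sum_{i<j}C_i\cdot C_j\ge n-1$; this is shorter and equally rigorous.

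The reverse direction, however, has a genuine gap, exactly where you flag ``the delicate part''. Your inductive step rests on two unproved claims: (i) existence of a prime component $C$ with $(D-C)\cdot C=1$ and $D-C$ still in the class (1)--(2), and (ii) acyclicity of $\O_{D-C}(-C)$. Claim (i) is true but needs an argument you do not supply (one can show $(D-E_i)\cdot E_i\ge 1$ for every $i$ by applying condition (2) to $D-E_i$, and that $\sum_i k_i\,(D-E_i)\cdot E_i=2\sum_i k_i-2$, so equality $(D-E_i)\cdot E_i=1$ must occur for some $i$, which also forces $D-E_i$ connected). Claim (ii) is the real problem: $\O_{D-C}(-C)$ is precisely the ideal sheaf $\II_{C,D}$, and the very sequence $0\to\II_{C,D}\to\O_D\to\O_C\to 0$ you use shows that its acyclicity is \emph{equivalent} (given $\chi=0$) to the conclusion $h^0(\O_D)=1$, $h^1(\O_D)=0$ that you are trying to prove. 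Deferring it to an unspecified ``secondary peel-the-tree induction'' is therefore close to circular unless that induction is set up with its own numerical hypotheses and actually carried out; moreover, the one concrete input you offer for it\dash that $\O_X(-C)$ has degree $-1$ on each neighbour of $C$ and $0$ on every other component\dash is wrong on $C$ itself when $C$ has multiplicity $\ge 2$ in $D$, where the degree is $-C^2$ and may be positive. Compare Lemma~\ref{lemma_main1}: the paper peels one copy of $E_a$ off the top of $k_aE_a$ via the conormal filtration, reduces the needed surjectivity to the Hermite-interpolation inequality $1-r_a(k_a-1)\ge\sum_{j}k_j$ on $E_a\cong\P^1$, and finds a suitable $a$ by averaging $\chi(\O_D)\ge 1$ over components. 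That quantitative work is precisely what your sketch still owes.
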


\begin{theorem}
An effective divisor $D$ on a surface $X$ with $h^1(X,\O_X)=h^2(X,\O_X)=0$ is strongly left-orthogonal if and only if the following conditions hold
\begin{enumerate}
\item $D$ is a tree of projective lines.
\item
$p_a(D)=0$ and for any connected divisor $D'$ such that $0<D'\le D$ one has $p_a(D')\le 0$.
\item
For any connected divisor $D'$ such that $0<D'\le D$ one has $p_a(D')\le 1+D\cdot D'$.
\end{enumerate}
\end{theorem}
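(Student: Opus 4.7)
The plan leverages Theorem~\ref{th_slo}, which already identifies conditions (1) and (2) as the criterion for left-orthogonality. Strong left-orthogonality adds only the requirement that $H^i(X,\O_X(D))=0$ for $i\ge 1$, so, assuming (1) and (2) throughout, the remaining task is to show that those vanishings are equivalent to (3). Tensoring the structure sequence of $D\subset X$ with $\O_X(D)$ yields
$$0\to\O_X\to\O_X(D)\to\O_D(D)\to 0,$$
and because $h^1(X,\O_X)=h^2(X,\O_X)=0$, the long exact sequence gives $H^i(X,\O_X(D))\cong H^i(D,\O_D(D))$ for every $i\ge 1$. Since $D$ is one-dimensional, only $H^1(D,\O_D(D))$ can be nonzero, so the problem reduces to recognizing combinatorially when that single group vanishes.

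I would isolate the following auxiliary lemma: for any line bundle $L$ on $D$,
$$H^1(D,L)=0\iff \deg(L|_{D'})\ge p_a(D')-1\ \text{for every connected subdivisor}\ 0<D'\le D.$$
Specialized to $L=\O_D(D)$, whose restriction to a subdivisor $D'$ has degree $D\cdot D'$, this inequality is precisely $p_a(D')\le 1+D\cdot D'$, matching (3). The ``only if'' half is essentially formal: for any closed subscheme $D'\subseteq D$ the surjection $L\twoheadrightarrow L|_{D'}$ gives a surjection $H^1(D,L)\twoheadrightarrow H^1(D',L|_{D'})$, so $H^1(L|_{D'})=0$, and then Riemann--Roch on $D'$ forces $\chi(L|_{D'})\ge 0$, i.e.\ the stated degree bound.

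For the harder direction I would induct on the number of irreducible components of $D$, using the tree structure from (1). The base case $D\cong\P^1$ is the classical $H^1(\P^1,\O(d))=0\iff d\ge -1$. For the inductive step, fix a leaf $C$ of the tree meeting $D'':=D-C$ in one node $p$, and apply the partial-normalization sequence
$$0\to\O_D\to\O_C\oplus\O_{D''}\to\O_p\to 0$$
tensored with $L$, obtaining a long exact sequence that makes $H^1(D,L)=0$ equivalent to the conjunction of $H^1(C,L|_C)=0$, $H^1(D'',L|_{D''})=0$, and surjectivity of the evaluation $H^0(L|_C)\oplus H^0(L|_{D''})\to L|_p$. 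The first two vanishings are immediate from the degree hypothesis and the inductive hypothesis applied to $D''$, which again satisfies (1) and (2) and inherits the degree bound on its connected subdivisors.

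The delicate point---and what I expect to be the main obstacle---is surjectivity of the evaluation at $p$ in the case $\deg L|_C=-1$, when $H^0(L|_C)=0$ and one must produce a section of $L|_{D''}$ that is nonzero at $p$. Failure of such a section would give $h^0(L|_{D''}(-p))=h^0(L|_{D''})$; via $0\to L|_{D''}(-p)\to L|_{D''}\to L|_p\to 0$ combined with Riemann--Roch, this forces $H^1(L|_{D''}(-p))\neq 0$, which by a second application of the inductive hypothesis identifies a connected subdivisor $D_0\subseteq D''$ containing $p$ on which $L|_{D''}(-p)$ violates the degree bound. Translating this failure back to $D':=C\cup D_0\le D$ via additivity of $p_a$ and intersection numbers (using $C\cdot D_0=1$ because $C$ is a leaf meeting $D''$ only at $p\in D_0$) contradicts condition (3). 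It is precisely because (3) is imposed on \emph{every} connected subdivisor rather than only on $D$ itself that this obstruction can be ruled out, and carrying out this translation carefully leaf by leaf is the principal technical content of the proof. Once the lemma is established, specialization to $L=\O_D(D)$ completes the equivalence.
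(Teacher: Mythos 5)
Your overall strategy coincides with the paper's: reduce to $h^1(X,\O_D(D))=0$ via the sequence $0\to\O_X\to\O_X(D)\to\O_D(D)\to 0$ (this is Lemma~\ref{lemma_lo}.(2)); get necessity of (3) from the surjection $H^1(\O_D(D))\to H^1(\O_{D'}(D))$ plus Riemann--Roch (this is exactly the first half of the paper's proof); and prove sufficiency by induction, peeling components off the tree and reducing the vanishing of $H^1$ to surjectivity of an evaluation map at the nodes (this is the paper's Lemma~\ref{lemma_cond4}). Your reformulation as a statement about arbitrary line bundles $L$ on $D$, with threshold $\deg(L|_{D'})\ge p_a(D')-1$, is a clean packaging of the same content.

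However, there is a genuine gap in the inductive step: your setup only works when $D$ is \emph{reduced}. The theorem must cover $D=\sum k_iE_i$ with $k_i>1$ (e.g.\ $D=2E$, $E^2=1$, which is strongly left-orthogonal). If the leaf component $E_a$ has multiplicity $k_a\ge 2$, then $C=E_a$ and $D''=D-E_a$ share a component and the partial-normalization sequence $0\to\O_D\to\O_C\oplus\O_{D''}\to\O_p\to 0$ is simply not exact; the correct tool is the ideal-filtration sequence $0\to\II^{k_a-1}/\II^{k_a}(D)\to\O_{D}(D)\to\O_{D-E_a}(D)\to 0$ with $\II^{k_a-1}/\II^{k_a}\cong\O_{E_a}(-r_a(k_a-1))$, which is what the paper uses (diagram~(\ref{eq_hataab})). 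If instead you peel off the whole multiple component $C=k_aE_a$, then the base case is no longer ``$H^1(\P^1,\O(d))=0\iff d\ge -1$'' but requires the vanishing statement for $H^1(\O_{kE}(b))$ of Lemma~\ref{lemma_cond3}, and the gluing datum at the node is a nonreduced scheme of length $k_ak_b$, so surjectivity of evaluation becomes a Hermite-interpolation condition on a polynomial and its derivatives of order $<k_b$, namely $1+b_a-r_a(k_a-1)\ge\sum_j k_j$. Moreover, in the nonreduced case it is not true that an arbitrary leaf satisfies this inequality: the paper has to \emph{locate} a suitable component $E_a$ by an averaging argument using $\chi(\O_{D'})+D\cdot D'\ge\tfrac12$, and the component it finds need not be a leaf. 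Your contradiction argument in the delicate case $\deg L|_C=-1$ is plausible for reduced trees, but it does not address any of these multiplicity phenomena, which constitute the actual technical content of the paper's proof.
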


Given characterization of left-orthogonal divisors was used in the first version of the  recent paper \cite{EL} of V.\,Lunts and the author in order to prove that any full strong exceptional collection of line bundles on a del Pezzo surface can be obtained by a certain explicit construction called \emph{standard augmentation}. We refer to \cite{EL} or \cite{HP2} for details. In the further versions we have got rid of studying geometry of left-orthogonal divisors and using the strongness condition.
Still we expect that the criteria of left-orthogonality from this note can be useful for the investigation on the non-del Pezzo case.

This work has grown from the collaboration with Valery Lunts to whom I am kindly grateful.

\section{Left-orthogonality of divisors on a surface}
\label{section_lo}

By a surface, in this note we mean a smooth projective surface over an algebraically closed field $\k$ of zero characteristic.

In this note we are interested in exceptional pairs of line bundles on surfaces. 
We recall that  a sheaf $\FF$ on an algebraic variety is called \emph{exceptional} if $\Hom(\FF,\FF)=\k$  and $\Ext^i(\FF,\FF)=0$ for $i>0$. A pair $(\FF_1,\FF_2)$ of sheaves is called \emph{exceptional} if $\FF_1$ and $\FF_2$ are exceptional and $\Ext^i(\FF_2,\FF_1)=0$ for all $i$. A pair $(\FF_1,\FF_2)$ of sheaves is called \emph{strong exceptional} if in addition  $\Ext^i(\FF_1,\FF_2)=0$ for $i>0$.

Clearly, a line bundle $\EE$ on a smooth projective surface $X$ is exceptional if and only if the structure sheaf $\O_X$ is exceptional which is equivalent to $h^1(X,\O_X)=h^2(X,\O_X)=0$. We note here that these conditions are satisfied for any rational surface. On other hand, there are irrational surfaces whose structure sheaf is exceptional: for example, Enriquez surfaces.   

Consider a pair of line bundles $(\O_X(D_1),\O_X(D_2))$ on a surface $X$. Clearly, it is an exceptional pair if and only if $\O_X$ is an exceptional sheaf and
$$h^i(X,\O(D_1-D_2))=0\qquad\text{for}\qquad i=0,1,2.$$
This motivates the following definition given in \cite{HP2}:

\begin{definition}
\label{def_lo}
We say that a divisor $D$ on a surface $X$ is \emph{left-orthogonal} if $$H^i(X,\O_X(-D))=0$$ for all $i$.

We say that a divisor $D$ on a surface $X$ is \emph{strongly left-orthogonal} if $D$ is left-orthogonal and  $$H^i(X,\O_X(D))=0$$ for $i=1,2$.
\end{definition}

The below proposition immediately follows from definitions.
\begin{predl}
A collection of line bundles
$$(\O_X(D_1),\ldots,\O_X(D_n))$$
on a surface is (strong and) exceptional if and only if the sheaf $\O_X$ is exceptional and
for any $1\le i<j\le n$ the divisor $D_j-D_i$ is (strongly) left-orthogonal. 
\end{predl}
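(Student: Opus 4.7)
The plan is to unpack both sides of the claimed equivalence and observe that they agree term by term. The only inputs needed are the standard identification $\Ext^k(\O_X(A), \O_X(B)) \cong H^k(X, \O_X(B - A))$ for line bundles on a smooth projective variety, and the fact (already noted in the excerpt) that $\O_X(D)$ is exceptional iff $\O_X$ is exceptional, since $\Ext^k(\O_X(D), \O_X(D)) \cong H^k(X, \O_X)$.

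First, I would handle the ``each summand is exceptional'' clause: by the observation above, asking each of the line bundles $\O_X(D_i)$ to be exceptional is equivalent to asking $\O_X$ to be exceptional. Next, for the semiorthogonality in the non-strong case, I would fix $i < j$ and rewrite
\begin{equation*}
\Ext^k\bigl(\O_X(D_j),\, \O_X(D_i)\bigr) \;\cong\; H^k\bigl(X,\, \O_X(D_i - D_j)\bigr) \;=\; H^k\bigl(X,\, \O_X(-(D_j - D_i))\bigr).
\end{equation*}
The vanishing of the left-hand side for all $k$ is exactly the definition of left-orthogonality of the divisor $D_j - D_i$. Quantifying over all pairs $i < j$ gives the stated equivalence in the non-strong case.

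Finally, for the ``strong'' version I would additionally compute
\begin{equation*}
\Ext^k\bigl(\O_X(D_i),\, \O_X(D_j)\bigr) \;\cong\; H^k\bigl(X,\, \O_X(D_j - D_i)\bigr),
\end{equation*}
and note that requiring this to vanish for $k > 0$ (for every $i < j$) is precisely the extra condition $H^i(X, \O_X(D)) = 0$ for $i = 1, 2$ imposed in Definition~\ref{def_lo} on each difference $D = D_j - D_i$ (recall $X$ is a surface, so higher cohomology vanishes automatically).

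There is no real obstacle here: the statement is a purely formal translation between the $\Ext$-vanishing formulation of (strong) exceptionality and the cohomology-vanishing formulation of (strong) left-orthogonality, and the proof amounts to applying the formula $\Ext^k(\O_X(A), \O_X(B)) \cong H^k(X, \O_X(B-A))$ and quantifying correctly over the indices.
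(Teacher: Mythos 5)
Your proof is correct and is exactly the argument the paper intends: the paper simply asserts that the proposition ``immediately follows from definitions,'' having already noted the identification of exceptionality of a pair $(\O_X(D_1),\O_X(D_2))$ with exceptionality of $\O_X$ plus vanishing of $h^i(X,\O_X(D_1-D_2))$. Your unpacking via $\Ext^k(\O_X(A),\O_X(B))\cong H^k(X,\O_X(B-A))$ and quantification over pairs $i<j$ is that same formal translation, so there is nothing to add.
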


\section{Some preliminaries}

\subsection{Arithmetic genus}
Let $Z$ be a scheme. Its \emph{arithmetic genus} is defined as 
$$p_a(Z)=1-\chi(Z,\O_Z)=1-\sum_i(-1)^ih^i(Z,\O_Z).$$
For an effective divisor $D$ on a surface $X$, one can consider $D$ as a (maybe non-reduced) subscheme of $X$. Hence one can speak about arithmetic genus of $D$. It can be calculated using Riemann-Roch formula. 
Indeed,
$$\chi(\O_D)=\chi(\O_X)-\chi(\O_X(-D))=\chi(\O_X)-(\chi(\O_X)+\frac12(-D(-D-K_X)))=
-\frac12D(D+K_X)$$
and
$$p_a(D)=1+\frac12D(D+K_X).$$

\subsection{Cohomology vanishing for non-reduced schemes}
Suppose $E\subset X$ is a smooth curve on a surface, $E\cong \P^1$. Denote by $\II=\II_E\subset \O_X$ the sheaf of ideals of~$E$. Then $\II/\II^2\cong \NN_{X,E}^*\cong \O_E(-r)$ where $r=E\cdot E$. Let $D=kE$ be the non-reduced closed subscheme of $X$ defined by the ideal sheaf $\II^k$. Then the structure sheaf $\O_D$ has a quotient-filtration
$$\O_{D}=\O_{kE}\to \O_{(k-1)E}\to\ldots\to \O_{2E}\to \O_E\to 0$$
with kernels
$$\ker(\O_{(i+1)E}\to \O_{iE})\cong \II^{i}/\II^{i+1}\cong (\II/\II^{2})^{\otimes i}\cong
 \O_E(-ir),\qquad{i=0,1,\ldots,k-1}.$$
Let $b$ be an integer.

\begin{lemma}
\label{lemma_cond3}
In the above notation  $H^1(X,\O_{kE}(b))=0$ if one of the following conditions hold: 
\begin{enumerate}
\item $b\ge -1, r\le 0$,
\item $b\ge -1, k=1$,
\item $b\ge 0, r=1, k=2$.
\end{enumerate}
\end{lemma}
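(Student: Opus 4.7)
The plan is to reduce the vanishing of $H^1(X,\O_{kE}(b))$ on the thickened scheme to the vanishing of $H^1$ of line bundles on the single $\P^1$ component $E$, using the quotient-filtration already introduced in the excerpt. First I would make the notation $\O_{kE}(b)$ precise: fix a line bundle $\O_X(M)$ on $X$ with $M\cdot E = b$ and set $\O_{kE}(b):=\O_{kE}\otimes \O_X(M)$. Any such $M$ works, since the graded pieces of the filtration are line bundles supported on $E$ and therefore only the restriction $\O_X(M)|_E\cong \O_E(b)$ enters the computation.

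Next I would tensor each short exact sequence
$$0\ra \O_E(-ir)\ra \O_{(i+1)E}\ra \O_{iE}\ra 0$$
with the locally free sheaf $\O_X(M)$ to get
$$0\ra \O_E(b-ir)\ra \O_{(i+1)E}(b)\ra \O_{iE}(b)\ra 0$$
for $i=0,1,\ldots,k-1$. An induction on $k$ via the associated long exact cohomology sequence then shows that if $H^1(E,\O_E(b-ir))=0$ for every $i$ in this range, then $H^1(X,\O_{kE}(b))=0$: the base case $k=1$ is the piece at $i=0$, and the inductive step uses that $H^1$ of the kernel and of $\O_{iE}(b)$ both vanish. On $\P^1$ the vanishing $H^1(\O_{\P^1}(d))=0$ is equivalent to $d\ge -1$, so the problem reduces to the numerical inequality $b-ir\ge -1$ for $i=0,1,\ldots,k-1$.

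It then remains to check this inequality in each of the three hypotheses. In case (1), $r\le 0$ forces $-ir\ge 0$ so $b-ir\ge b\ge -1$ for every $i$. In case (2) only the value $i=0$ occurs, and $b\ge -1$ holds by assumption. In case (3) only $i\in\{0,1\}$ occur, giving the exponents $b$ and $b-1$, both $\ge -1$ because $b\ge 0$.

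I do not foresee any real obstacle: the argument is a short induction up the filtration plus three numerical verifications. The only place that needs a little care is the very first step, namely interpreting $\O_{kE}(b)$ as a twist by some $\O_X(M)$ with $M\cdot E=b$ and verifying that tensoring by this locally free sheaf keeps the filtration short-exact, so that its graded pieces really are $\O_E(b-ir)$.
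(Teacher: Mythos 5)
Your proof is correct and follows essentially the same route as the paper: induction up the quotient-filtration, reducing each step to $H^1(\P^1,\O_{\P^1}(b-ir))=0$ for $b-ir\ge -1$, with the same three numerical verifications. The only difference is that you additionally pin down the meaning of the twist $\O_{kE}(b)$, a point the paper leaves implicit.
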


\begin{proof}
The proof is by induction in $k$. Note that it is reasonable since the above conditions are stable under decreasing of $k$.

For $k=1$, we have $H^1(X,\O_E(b))=H^1(\P^1,\O_{\P^1}(b))=0$ since $b\ge -1$.

For the induction step, suppose that $k\ge 2$. 
Consider exact sequence
$$0\to \II^{k-1}/\II^{k}(b)\to \O_{kE}(b)\to \O_{(k-1)E}(b)\to 0,$$
it gives an exact sequence
$$H^1(X,\II^{k-1}/\II^{k}(b))\to H^1(X,\O_{kE}(b))\to H^1(X,\O_{(k-1)E}(b)).$$
By induction hypothesis, $H^1(X,\O_{(k-1)E}(b))=0$. Also, 
$\II^{k-1}/\II^{k}(b)\cong \O_{\P^1}(b-r(k-1))$ where $b-r(k-1)\ge -1$. Therefore
$H^1(X,\II^{k-1}/\II^{k}(b))=0$ and
$H^1(X,\O_{kE}(b))=0$ what concludes the proof.
\end{proof}

\subsection{Intersection with canonical divisor}
The next easy fact follows directly from adjunction formula.
\begin{lemma}
Let $E\subset X$ be a smooth curve on a surface, $E\cong \P^1$. Then
$$E\cdot K_X=-2-E\cdot E.$$
\end{lemma}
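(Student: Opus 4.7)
The plan is to apply the adjunction formula directly. For any smooth irreducible curve $C$ on a smooth surface $X$, the adjunction formula states
$$2g(C)-2 = C\cdot(C+K_X) = C\cdot C + C\cdot K_X,$$
where $g(C)$ is the (geometric) genus of $C$. This is standard and may be cited without reproof.

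Next, I would substitute $C=E$. Since $E\cong \P^1$, we have $g(E)=0$, so the left-hand side becomes $-2$. Rearranging yields
$$E\cdot K_X = -2 - E\cdot E,$$
which is the claim.

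There is essentially no obstacle: the entire content of the lemma is the specialization of the adjunction formula to the case of a rational curve. The only minor point to mention, if desired, is that $E$ being smooth and isomorphic to $\P^1$ legitimizes both the use of adjunction (smoothness) and the vanishing of the genus ($E\cong\P^1$).
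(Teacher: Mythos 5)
Your proof is correct and matches the paper's approach: the paper simply notes that the lemma ``follows directly from adjunction formula,'' and your argument spells out exactly that computation, substituting $g(E)=0$ into $2g(E)-2=E\cdot(E+K_X)$.
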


\section{Criterion for left-orthogonality}	
Further we make the following assumptions. Suppose $X$ is a surface with $h^1(X,\O_X)=h^2(X,\O_X)=0$ (for instance, a rational surface). Let $D$ be an effective divisor on $X$. Suppose $D=\sum k_iE_i$  where $E_i$ are prime divisors, let $r_i=E_i^2$. Denote $D_i=k_iE_i$. 
Also we denote by $D_i$ and $D$ the corresponding (possibly) non-reduced subschemes of~$X$.

We determine using these data whether $D$ is left-orthogonal and strongly left-orthogonal.

We start with two simple lemmas.
\begin{lemma}
\label{lemma_lo}
Let $D$ be an effective divisor on a surface $X$ with $h^1(X,\O_X)=h^2(X,\O_X)=0$. Then:
\begin{enumerate}
\item
Divisor $D$ is left-orthogonal if and only if for the structure sheaf of a closed subscheme $D\subset X$ one has
\begin{equation}
\label{eq_h0h1h2}
h^0(X,\O_D)=1,\qquad h^1(X,\O_D)=0.
\end{equation}
\item Divisor $D$ is strongly left-orthogonal if and only if $D$ is left-orthogonal  and one has 
\begin{equation}
h^1(X,\O_D(D))=0.
\end{equation}
\end{enumerate}
\end{lemma}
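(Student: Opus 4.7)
The plan is to deduce both parts from the long exact sequences in cohomology attached to the two standard short exact sequences
\begin{equation*}
0 \to \O_X(-D) \to \O_X \to \O_D \to 0 \qquad \text{and} \qquad 0 \to \O_X \to \O_X(D) \to \O_D(D) \to 0,
\end{equation*}
the second being the first tensored with $\O_X(D)$. The key inputs are the standing hypothesis $h^1(X,\O_X)=h^2(X,\O_X)=0$ together with $h^0(X,\O_X)=1$, and the fact that $D$ is a one-dimensional subscheme so that $H^i(X,\O_D)=H^i(X,\O_D(D))=0$ for $i\ge 2$.

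For part (1), I take cohomology of the first sequence. Assuming $D>0$ (the case $D=0$ being trivial, since then $\O_D=0$ and both sides of the claimed equivalence fail), we have $h^0(X,\O_X(-D))=0$ and $H^i(X,\O_X(-D))=0$ for $i\ge 3$. The long exact sequence then splits into the two pieces
\begin{equation*}
0 \to \k \to H^0(X,\O_D) \to H^1(X,\O_X(-D)) \to 0, \qquad 0 \to H^1(X,\O_D) \to H^2(X,\O_X(-D)) \to 0,
\end{equation*}
which give $h^1(X,\O_X(-D))=h^0(X,\O_D)-1$ and $h^2(X,\O_X(-D))=h^1(X,\O_D)$. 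Hence the simultaneous vanishing of $H^1$ and $H^2$ of $\O_X(-D)$ is equivalent to the two conditions in~\eqref{eq_h0h1h2}, which is precisely part (1).

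For part (2), I apply the same procedure to the second short exact sequence. Since $h^1(X,\O_X)=h^2(X,\O_X)=0$ and $H^2(X,\O_D(D))=0$ by dimension, the long exact sequence produces an isomorphism $H^1(X,\O_X(D)) \cong H^1(X,\O_D(D))$ together with the automatic vanishing $H^2(X,\O_X(D))=0$. Consequently, once $D$ is known to be left-orthogonal, the additional condition $H^i(X,\O_X(D))=0$ for $i=1,2$ in the definition of strong left-orthogonality collapses to the single equation $h^1(X,\O_D(D))=0$. There is no genuine obstacle: the lemma is a formal diagram chase whose only role is to transfer the cohomological conditions from the line bundles $\O_X(\pm D)$ on the surface to sheaves supported on the curve $D$, where the combinatorial analysis of subsequent sections can take over.
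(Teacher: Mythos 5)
Your proof is correct and follows essentially the same route as the paper: both parts are deduced from the long exact cohomology sequences of $0\to\O_X(-D)\to\O_X\to\O_D\to 0$ and $0\to\O_X\to\O_X(D)\to\O_D(D)\to 0$, using $h^1(X,\O_X)=h^2(X,\O_X)=0$ and the vanishing of $H^2$ of sheaves supported on the one-dimensional scheme $D$. The only difference is cosmetic: you spell out the splitting of the long exact sequence and the trivial case $D=0$, which the paper leaves implicit.
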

\begin{proof}
\begin{enumerate}
\item 
Consider the exact sequence of sheaves on~$X$
$$0\to \O_X(-D)\to \O_X\to \O_D\to 0.$$
Its long exact sequence of cohomology implies that $H^i(X,\O_X(-D))=0$ for all~$i$ $\iff H^i(X,\O_D)\cong H^i(X,\O_X)$.
Keep in mind that $h^2(X,\O_D)=0$ because $D$ is a one-dimensional scheme.
\item  Consider the exact sequence of sheaves 
on~$X$:
\begin{equation}
\label{eq_OD}
0\to \O_X\to\O_X(D)\to\O_D(D)\to 0.
\end{equation}
Note that $H^2(X,\O_D(D))=0$ because the sheaf $\O_D(D)$ is supported in dimension~$1$. Therefore the long exact sequence of cohomology implies that a left-orthogonal divisor $D$ is strongly left-orthogonal if and only if $H^1(X,\O_D(D))=0$.
\end{enumerate}
\end{proof}

\begin{lemma}
\label{lemma_subdivisor}
Suppose $D$ is an effective left-orthogonal divisor on a surface $X$ with $h^1(X,\O_X)=h^2(X,\O_X)=0$. Let $D'\le D$ be an effective reduced connected divisor. Then $D'$ is also left-orthogonal. 
\end{lemma}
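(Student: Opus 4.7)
The plan is to apply Lemma~\ref{lemma_lo}(1), which reduces left-orthogonality of $D'$ to the two equalities $h^0(X,\O_{D'})=1$ and $h^1(X,\O_{D'})=0$. The first equality comes for free from the hypotheses on $D'$: since $D'$ is connected, reduced, and projective over the algebraically closed field $\k$, its structure sheaf satisfies $h^0(X,\O_{D'})=1$.

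For the second equality, I would set $D''=D-D'\ge 0$ and exploit the natural surjection $\O_D\twoheadrightarrow\O_{D'}$ coming from the inclusion $D'\subset D$ of closed subschemes. The kernel is $\II_{D'}/\II_D=\O_X(-D')/\O_X(-D)$, and tensoring the sequence $0\to \O_X(-D'')\to\O_X\to\O_{D''}\to 0$ by the line bundle $\O_X(-D')$ identifies this kernel with $\O_{D''}(-D')$. This yields the short exact sequence
\[
0\to \O_{D''}(-D')\to \O_D\to \O_{D'}\to 0.
\]

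Then I would read the vanishing off of the long exact sequence of cohomology. Since $\O_{D''}(-D')$ is supported in dimension one, $H^2(X,\O_{D''}(-D'))=0$, and since $D$ is left-orthogonal, Lemma~\ref{lemma_lo}(1) gives $H^1(X,\O_D)=0$. The fragment
\[
H^1(X,\O_D)\longrightarrow H^1(X,\O_{D'})\longrightarrow H^2(X,\O_{D''}(-D'))
\]
is therefore sandwiched between two zeros, forcing $H^1(X,\O_{D'})=0$; a second application of Lemma~\ref{lemma_lo}(1) then finishes the argument.

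I do not anticipate a real obstacle here. The only delicate point is the identification of the kernel of $\O_D\to\O_{D'}$ with $\O_{D''}(-D')$, which is purely a bookkeeping matter with twists by $\O_X(-D')$ and $\O_X(-D)$. It is also worth noting that the reduced hypothesis is used exactly once, to ensure $h^0(X,\O_{D'})=1$; the argument would fail for a non-reduced connected $D'$, whose structure sheaf can have higher-dimensional global sections.
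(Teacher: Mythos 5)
Your proposal is correct and follows essentially the same route as the paper: both use the short exact sequence $0\to\Kern\to\O_D\to\O_{D'}\to 0$, kill $H^2$ of the kernel because it is supported in dimension one, and invoke Lemma~\ref{lemma_lo}(1) twice, with connectedness and reducedness supplying $h^0(X,\O_{D'})=1$. The only cosmetic difference is that you identify the kernel explicitly as $\O_{D''}(-D')$, whereas the paper simply calls it the ideal sheaf $\II_{D,D'}$ and notes it is supported on $D$; both observations equally give the needed vanishing of $H^2$.
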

\begin{proof}
We treat $D$ and $D'$ as closed subschemes of $X$. Note that $D'$ is a closed subscheme of $D$. Consider the exact sequence of sheaves on $X$:
$$0\to \II_{D,D'}\to \O_D\to\O_{D'}\to 0,$$
where $\II_{D,D'}$ denotes the sheaf of ideals of the closed subscheme $D'\subset D$.
Its long exact sequence of cohomology has a fragment
$$H^1(X,\O_D)\to H^1(X,\O_{D'})\to H^2(X,\II_{D,D'}).$$
Since the sheaf of ideals $\II_{D,D'}$ is supported on $D$, its second cohomology vanishes.
By Lemma \ref{lemma_lo}.(1) we have $H^1(X,\O_D)=0$, it follows that $H^1(X,\O_{D'})=0$. Also, we have $H^0(X,\O_{D'})=\k$ because $D'$ is connected and reduced. By Lemma \ref{lemma_lo}.(1), the divisor $D'$ is left-orthogonal.
\end{proof}

Suppose that effective divisors $C_1,\ldots,C_n$ on a smooth surface $X$ have no common components, denote $C=\sum C_i$. We treat $C$ and $C_i$ as (maybe non-reduced) subschemes of $X$. In this section we will consider the following 
sequence of sheaves on $X$:
\begin{equation}
\label{eq_cutting}
0\to \O_C\to \bigoplus_i \O_{C_i}\to \bigoplus_{i<j}\bigoplus_{P\in C_i\cap C_j}\O_{(C_i\cap C_j)_P}\to 0,
\end{equation}
where $(C_i\cap C_j)_P$ denotes the scheme-theoretic intersection supported in the point $P$.
Here the map $\O_C\to \oplus_i \O_{C_i}$ is the sum of restrictions and the map 
$\oplus_i\O_{C_i}\to \oplus_{i<j}\oplus_{P\in C_i\cap C_j}\O_{(C_i\cap C_j)_P}$ is a collection of differences.
This sequence is exact if at any point $P$ of intersection of $C_i$-s only two divisors  meet.

\begin{predl}
\label{prop_tree}
Let $D=\sum k_iE_i$ be an effective left-orthogonal divisor on a surface $X$ with $h^1(X,\O_X)=h^2(X,\O_X)=0$. Then every component $E_i$ is isomorphic to $\P^1$, any intersection of components $E_i\cap E_j$ is transversal and components of $D$ form a tree.
\end{predl}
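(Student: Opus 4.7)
The plan is to apply Lemma \ref{lemma_subdivisor} to progressively larger reduced connected subdivisors $D'\le D$ and to translate the cohomological conclusion into numerical constraints via the genus formula $p_a(D')=1+\tfrac12 D'(D'+K_X)$. In every case, Lemma \ref{lemma_subdivisor} gives that $D'$ is left-orthogonal, so $\chi(\O_{D'})=1$, i.e.\ $p_a(D')=0$.

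First I would take $D'=E_i$. The resulting identity $p_a(E_i)=0$, combined with the standard inequality $p_a(E_i)\ge g(\widetilde{E_i})+\delta(E_i)$ for an integral curve on a smooth surface in characteristic zero (with $\widetilde{E_i}$ the normalization and $\delta\ge 0$ the total singularity invariant), forces both summands to vanish. Hence $E_i$ is smooth and its normalization has genus $0$, so $E_i\cong\P^1$.

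Next, for any $i\ne j$ with $E_i\cap E_j\ne\emptyset$, I would apply the lemma to the reduced connected divisor $D'=E_i+E_j\le D$. Expanding $(E_i+E_j)(E_i+E_j+K_X)$ and inserting $p_a(E_i)=p_a(E_j)=0$ gives $p_a(D')=E_i\cdot E_j-1$. From $p_a(D')=0$ we conclude $E_i\cdot E_j=1$, which means $E_i$ and $E_j$ meet in a single point with local intersection multiplicity $1$. In particular, between any two components there is at most one edge in the dual graph, so double edges are already ruled out.

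For the tree property I would argue by contradiction. Suppose the dual graph of components has a cycle $E_{i_1},\ldots,E_{i_n}$; by the previous step necessarily $n\ge 3$. Set $D'=E_{i_1}+\ldots+E_{i_n}$, which is reduced, connected and $\le D$. The analogous expansion of $D'(D'+K_X)$ gives
\begin{equation*}
p_a(D')=1-n+\sum_{k<l}E_{i_k}\cdot E_{i_l}.
\end{equation*}
Each of the $n$ edges of the cycle contributes at least $1$ to the sum, so $p_a(D')\ge 1$, contradicting $p_a(D')=0$. Hence no cycle exists, and the components form a tree.

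The only nontrivial step is the first one: transferring the purely cohomological condition $p_a(E_i)=0$ into the geometric statement that $E_i$ is smooth rational. Everything after that is a combinatorial consequence of Lemma \ref{lemma_subdivisor} together with a single adjunction calculation on reduced unions of components.
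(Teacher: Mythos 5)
Your argument is correct in substance but follows a genuinely different route from the paper for everything after the first step. The paper also begins by applying Lemma~\ref{lemma_subdivisor} to a single component and citing the fact that an integral curve with $h^1(\O_E)=0$ is $\P^1$ (your normalization inequality $p_a=g(\widetilde E)+\delta$ is exactly the content of the reference used there). But for the transversality and tree statements the paper works cohomologically: it applies the Mayer--Vietoris-type sequence \eqref{eq_cutting} to $C=E_1+E_2$ (and to $E_1+(E_2+E_3)$ for triple points), and reads off from $h^1(\O_C)=0$ that $\sum_P h^0(\O_{(E_1\cap E_2)_P})=1$. You instead convert left-orthogonality of each reduced connected subdivisor into $p_a(D')=0$ and then run adjunction: $p_a(E_i+E_j)=E_i\cdot E_j-1$ forces $E_i\cdot E_j=1$, and $p_a$ of a cycle is $\ge 1$. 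This is cleaner and purely numerical, and it is closer in spirit to Artin's criterion for rational singularities; note also that a triple point automatically creates a triangle in the dual graph, so your cycle argument subsumes the paper's separate treatment of three concurrent components. What the paper's approach buys is that sequence \eqref{eq_cutting} is needed anyway for the main induction in Lemma~\ref{lemma_main1}, so its use here costs nothing extra.

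One small genuine omission: ``components of $D$ form a tree'' includes connectedness of the dual graph, and your argument only excludes cycles, i.e.\ shows it is a forest. This is repaired in one line, as in the paper: $h^0(X,\O_D)=1$ by Lemma~\ref{lemma_lo}.(1), so the support of $D$ is connected. You should add that sentence.
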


\begin{proof}
Let $E$ be a component of $D$. Then $E$ is an irreducible left-orthogonal divisor by Lemma~\ref{lemma_subdivisor}, we also treat $E$ as a reduced subscheme of $X$. We have 	$H^1(E,\O_E)=H^1(X,\O_E)=0$ by Lemma~\ref{lemma_lo}.(1).
It follows that $E\cong \P^1$, see 
\cite[Exercise IV.1.8b]{Ha}.

Now consider any pair  of components in $D$ that intersect nontrivially. Let them be $E_1,E_2$ and $P_1,\ldots,P_m$ be the common points of $E_1$ and $E_2$. Take  $C=E_1+E_2$, then $C$ is left-orthogonal by Lemma~\ref{lemma_subdivisor}.
Consider exact sequence (\ref{eq_cutting}) for $C_1=E_1$ and $C_2=E_2$, it has the form:
$$0\to \O_{C}\to \O_{E_1}\oplus \O_{E_2}\to \oplus_{j=1}^m \O_{(E_1\cap E_2)_{P_j}}\to  0.$$
We have $h^1(X,\O_C)=0$,
therefore the long exact sequence of cohomology implies that $\sum_j h^0(X,\O_{(E_1\cap E_2)_{P_j}})=1$. 
Recall that $(E_1\cap E_2)_{P_j}$ denote the scheme-theoretic intersection.
Therefore $m=1$ and $E_1$ and $E_2$ intersect transversally in the unique point $P_1$.

Let us show that no three components of $D$ meet at one point. Suppose $P\in E_1\cap E_2\cap E_3$. Let $E_{23}=E_2+E_3$ and $C=E_1+E_{23}$, then $C,E_1$ and $E_{23}$ are left-orthogonal by Lemma~\ref{lemma_subdivisor}.
Let $P=P_1,\ldots,P_m$ be the common points of $E_1$ and $E_{23}$.
Consider the sequence (\ref{eq_cutting}) for~$C_1=E_1$ and $C_2=E_{23}$. It is exact and has the form:
$$0\to \O_{C}\to \O_{E_1}\oplus \O_{E_{23}}\to \oplus_{j=1}^m \O_{(E_1\cap E_{23})_{P_j}}\to 0.$$
As above, long exact sequence of cohomology implies that $\sum_j h^0(X,\O_{(E_1\cap E_{23})_{P_j}})=1$. But the intersection $(E_1\cap E_{23})_P$ is non-reduced and $h^0(X,\O_{(E_1\cap E_{23})_P})\ge 2$, we get a contradiction.

In the same manner one can demonstrate that components of $D$ cannot form a cycle.
Finally, $D$ is connected because $h^0(X,\O_D)=1$ by Lemma~\ref{lemma_lo}.(1). 
\end{proof}

\begin{definition}
Let $D=\sum k_iE_i$ be an effective divisor on a surface. By saying that $D$ is a \emph{tree of projective lines} we mean that every component $E_i$ of $D$ is isomorphic to $\P^1$, any intersection of components $E_i\cap E_j$ is transversal and components of $D$ form a tree.
\end{definition}

\begin{theorem}
\label{th_slo}
An effective divisor $D$ on a surface $X$ with $h^1(X,\O_X)=h^2(X,\O_X)=0$ is left-orthogonal if and only if the following conditions hold
\begin{enumerate}
\item $D$ is a tree of projective lines.
\item
$p_a(D)=0$ and for any connected divisor $D'$ such that $0<D'\le D$ one has $p_a(D')\le 0$.
\end{enumerate}
\end{theorem}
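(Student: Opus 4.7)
The plan is to prove the two directions separately. For necessity, assertion (1) is exactly Proposition~\ref{prop_tree}, so only (2) remains. Lemma~\ref{lemma_lo}(1) gives $\chi(\O_D) = 1$ and hence $p_a(D) = 0$. For any $0 < D' \le D$ I would apply the ideal-sheaf exact sequence $0 \to \O_{D-D'}(-D') \to \O_D \to \O_{D'} \to 0$. A nonzero constant in $H^0(\O_D) = \k$ cannot vanish on the nonempty subscheme $D'$, so the inclusion $H^0(\O_{D-D'}(-D')) \hookrightarrow H^0(\O_D)$ has zero image, forcing $H^0(\O_{D-D'}(-D')) = 0$. Combining this with $H^1(\O_D) = 0$ and $H^2(\O_{D-D'}(-D')) = 0$, the long exact sequence produces $h^1(\O_{D'}) = 0$ and $h^0(\O_{D'}) \ge 1$, hence $\chi(\O_{D'}) \ge 1$ and $p_a(D') \le 0$.

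For sufficiency I would prove by strong induction on $N := \sum_i k_i$ the mildly strengthened statement~$(\star)$: \emph{if $D$ is a tree of projective lines and $p_a(D') \le 0$ for every connected $0 < D' \le D$, then $H^1(X, \O_D) = 0$}. The theorem then follows because hypothesis (2) additionally imposes $p_a(D) = 0$, so $\chi(\O_D) = 1$ combined with $h^1 = 0$ forces $h^0(\O_D) = 1$, and Lemma~\ref{lemma_lo}(1) delivers left-orthogonality. The base $N = 1$ is immediate. When $D = kE$ is supported on a single prime component with $r = E^2$, the constraints $p_a(jE) \le 0$ for $2 \le j \le k$ force $kr \le 2$, and a small case-check confirms every admissible pair $(k, r)$ lies in one of the three hypotheses of Lemma~\ref{lemma_cond3} applied with $b = 0$.

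If $D$ has at least two prime components, the tree structure supplies a leaf prime component $E$ (of multiplicity $k$) meeting its unique neighbor $F$ (of multiplicity $m$) transversally at a point $P$. Writing $D = kE + D''$, both $kE$ and $D''$ inherit $(\star)$ and have strictly smaller $N$, so by induction $H^1(\O_{kE}) = H^1(\O_{D''}) = 0$. I would then apply the Mayer--Vietoris sequence~(\ref{eq_cutting}):
$$0 \to \O_D \to \O_{kE} \oplus \O_{D''} \to \O_W \to 0,$$
where $W = kE \cap D''$ is the Artinian local scheme $\Spec \k[x,y]/(x^k, y^m)$ of length $km$ supported at $P$. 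The long exact sequence, together with the inductive vanishings, reduces the desired $H^1(\O_D) = 0$ to surjectivity of the restriction map $H^0(\O_{kE}) \oplus H^0(\O_{D''}) \to H^0(\O_W)$.

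Establishing this surjectivity is the main obstacle. My plan is to describe both images explicitly through the natural filtrations: a global section of $\O_{kE}$ locally at $P$ has the form $\sum_{i=0}^{k-1} g_i(y) x^i$, where $g_i$ is a polynomial in the coordinate $y$ on $E$ of degree at most $-ir$ (and identically zero if $-ir < 0$), and symmetrically for $\O_{D''}$ along $F$, with additional constraints or freedom transmitted from the remaining branches of the tree. The identity $p_a(D) = p_a(kE) + p_a(D'') + km - 1$ together with $p_a(D) = 0$ predicts the correct combined image dimension $km$, and the degree bounds guaranteed by $(\star)$ on the intermediate connected sub-divisors $jE$ and on the sub-divisors of $D''$ are exactly what is needed to show that the two spans together cover every monomial $x^i y^j$ with $i < k$, $j < m$ in $\k[x,y]/(x^k, y^m)$.
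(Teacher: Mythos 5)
Your necessity argument is correct and is essentially the paper's: Proposition~\ref{prop_tree} gives (1), and the ideal-sheaf sequence together with $H^2(\II_{D',D})=0$ gives $h^1(\O_{D'})=0$, hence $p_a(D')=1-h^0(\O_{D'})\le 0$. (Your extra step showing $H^0(\O_{D-D'}(-D'))=0$ is not needed for this.) The top-level shape of your sufficiency argument --- induct on $\sum k_i$ to prove $h^1(\O_D)=0$ from the genus inequalities, then use $p_a(D)=0$ and Lemma~\ref{lemma_lo}(1) --- also matches the paper (Lemma~\ref{lemma_main1}), and your single-component case is a correct application of Lemma~\ref{lemma_cond3}.

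The induction step, however, has a genuine gap. Your reduction to surjectivity of $H^0(\O_{kE})\oplus H^0(\O_{D''})\to H^0(\O_W)$ is valid, but given the inductive vanishings this surjectivity is \emph{equivalent} to $h^1(\O_D)=0$, so it is the entire content of the step, and the plan you sketch does not establish it. Two concrete problems. First, the image of $H^0(\O_{D''})$ in the length-$km$ space $H^0(\O_W)$ is a global invariant of $D''$: it is governed by which jets at $P$ along $F$ are realized by global sections of $\O_{D''}$, and your induction hypothesis ($h^1=0$ for connected subdivisors) gives no control over this; the ``additional constraints or freedom transmitted from the remaining branches'' is precisely the unproven part. (Relatedly, a section of $\O_{kE}$ is not canonically a sum $\sum_i g_i y^i$ with $g_i\in H^0(\O_E(-ir))$; only the associated-graded statement holds --- fixable, but it must be used via the filtration.) Second, you never choose the leaf, whereas the numerical hypothesis must enter somewhere: in the paper it enters through an averaging argument ($\chi(\O_D)\ge 1$ forces the existence of a component $E_a$ with $\sum_j k_j\le 1-r_a(k_a-1)$), which selects the component at which a single Hermite-interpolation map $H^0(\P^1,\O(-r_a(k_a-1)))\to\oplus_j H^0(\O(-r_a(k_a-1))|_{k_jP_j})$ is onto. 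The paper avoids your difficulty by a different decomposition --- it lowers one multiplicity by one ($D\mapsto D-E_a$) rather than removing a whole leaf, so the ``new'' part of the restriction map is this one classical interpolation problem on $\P^1$. To make your leaf-peeling route work you would need to strengthen the inductive statement to one about surjectivity onto jet schemes at the attachment point of $D''$ and justify the choice of leaf numerically; as written, the assertion that the degree bounds ``are exactly what is needed to cover every monomial'' restates the theorem rather than proving it.
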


\begin{proof}
By Proposition~\ref{prop_tree} condition (1) is fulfilled for a left-orthogonal divisor $D$.  

By Lemma~\ref{lemma_lo}, left-orthogonality of $D$ is equivalent to equalities 
$\chi(D)=1, h^1(D)=0$. Suppose these equalities are fulfilled. Then for any effective connected divisor $D'\le D$ one has an exact sequence 
$$0\to \II_{D',D}\to \O_{D}\to \O_{D'}\to 0.$$
It gives an exact sequence 
$$0=H^1(X,\O_{D})\to H^1(X,\O_{D'})\to H^2(X,\II_{D',D})=0$$
therefore $h^1(X,\O_{D'})=0$ and $p_a(D')=1-h^0(D')\le 0$.

The proof in the other direction follows readily from the next lemma.
\end{proof}

\begin{lemma}
\label{lemma_main1}
Let $D$ be an effective divisor  on a surface $X$ with $h^1(X,\O_X)=h^2(X,\O_X)=0$. Suppose $D$ is a tree of projective lines. Suppose also that
$\chi(D')\ge 1$ for any connected divisor $D'$ such that $0<D'\le D$. Then $h^1(D)=0$.
\end{lemma}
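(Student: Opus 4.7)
The plan is to proceed by induction on the total multiplicity $N = \sum_i k_i$ of $D$. The base case $N = 1$ is immediate: $D$ is a single copy of some $E_i \cong \P^1$, and $h^1(\O_{\P^1}) = 0$.

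For the inductive step, the workhorse is the short exact sequence
\begin{equation*}
0 \to \O_X(-(D-E))|_E \to \O_D \to \O_{D-E} \to 0
\end{equation*}
attached to any component $E = E_i$ of $D$, arising from the inclusions $\O_X(-D) \subset \O_X(-(D-E)) \subset \O_X$. The leftmost term is a line bundle on $E \cong \P^1$ of degree $a = -(D-E)\cdot E$. If $E$ can be chosen so that $a \ge -1$, then $H^1(\O_E(a)) = 0$, and the long exact sequence of cohomology (together with the vanishing of $H^2$ for any sheaf supported on a curve) collapses to an isomorphism $H^1(\O_D) \cong H^1(\O_{D-E})$. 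The divisor $D - E$ has strictly smaller total multiplicity, and each of its connected components is a tree of projective lines whose connected sub-divisors are also connected sub-divisors of $D$; so each connected component still satisfies the hypothesis, and the inductive hypothesis applied componentwise gives $H^1(\O_{D-E}) = 0$, closing the induction.

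The crux, and the main obstacle, is to show that a component $E$ with $(D-E)\cdot E \le 1$ always exists. I will prove this by a weighted averaging identity. From transversality and the tree structure, $(D - E_i)\cdot E_i = (k_i - 1)r_i + \sum_{j \sim i} k_j$ for each prime $E_i$. Summing over $i$ with weights $k_i$ and combining $\sum_i k_i \sum_{j \sim i} k_j = D^2 - \sum_i k_i^2 r_i$ (from expanding $D^2$ over the tree) with $\sum_i k_i r_i = -2N - K_X \cdot D$ (which follows from $E_i \cdot K_X = -2 - r_i$) yields
\begin{equation*}
\sum_i k_i (D - E_i)\cdot E_i = D^2 + K_X\cdot D + 2N = 2p_a(D) - 2 + 2N.
\end{equation*}
The hypothesis $\chi(\O_D) \ge 1$ forces $p_a(D) \le 0$, so the left-hand side is at most $2N - 2$. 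If every $(D-E_i)\cdot E_i$ were $\ge 2$, the sum would be at least $2\sum_i k_i = 2N$, contradicting the bound $2N-2$. Hence some component $E$ satisfies $(D-E)\cdot E \le 1$, and the induction proceeds.
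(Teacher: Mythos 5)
Your proof is correct, and its decisive combinatorial step coincides with the paper's: both arguments locate a component $E_a$ with $(D-E_a)\cdot E_a\le 1$ by averaging against the hypothesis $\chi(\O_D)\ge 1$ — your identity $\sum_i k_i\,(D-E_i)\cdot E_i=2p_a(D)-2+2N$ is exactly the paper's expansion $\chi(\O_D)=\tfrac12\sum_i k_i\bigl(2+r_i-k_ir_i-\sum_{j}k_j\bigr)$ rewritten. Where you genuinely diverge is in the cohomological reduction. You peel off a single reduced copy of $E_a$ via the decomposition sequence $0\to\O_{E_a}(-(D-E_a))\to\O_D\to\O_{D-E_a}\to 0$, so the entire inductive step rests on $H^1(\P^1,\O(d))=0$ for $d\ge -1$ applied to the kernel, whose degree is $-(D-E_a)\cdot E_a\ge -1$. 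The paper instead first splits $D$ into the multiple components $k_iE_i$ by the Mayer--Vietoris-type sequence (\ref{eq_cutting}), treats the case $D=kE$ separately using Lemma~\ref{lemma_cond3}, and reduces the inductive step to surjectivity of a restriction map checked by Hermite interpolation; that surjectivity is governed by the inequality $1-r_a(k_a-1)\ge\sum_j k_j$, which is again $(D-E_a)\cdot E_a\le 1$, so the two mechanisms are numerically equivalent. Your packaging is shorter — it needs no special case and no interpolation — while the paper's diagram (\ref{eq_aab}) has the side benefit of being reused almost verbatim in the proof of Lemma~\ref{lemma_cond4}. Your handling of a possibly disconnected $D-E_a$ (applying the inductive hypothesis componentwise, since connected subdivisors of $D-E_a$ are connected subdivisors of $D$) matches the paper's treatment and is sound.
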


\begin{proof}
First we consider the special case $D=kE$ where $E$ is a prime divisor. 
We have
$$1\le \chi(\O_D)=-\frac12 kE(kE+K_X)=\frac12(-k^2r+k(2+r))=\frac k2(2+r(1-k)).$$
Therefore $2+r(1-k)$ is a positive integer and $r(k-1)\le 1$. It follows that for $b=0$ one of conditions from Lemma~\ref{lemma_cond3} is fulfilled and thus $h^1(\O_D)=0$.

The general case $D=\sum_i k_iE_i$ where $E_i$ are prime divisors is treated by induction in $\sum k_i$. The base of induction is contained in the above special case. For the step of induction, consider the  sequence  
(\ref{eq_cutting}) for~$C_i=D_i$:

$$0\to \O_D\to \bigoplus_i \O_{D_i}\to \bigoplus_{i< j}\O_{D_i\cap D_j}\to 0.$$
Since all intersections of components are simple, this sequence is exact.
Its long exact sequence of cohomology starts with 
$$
\xymatrix{
0\ar[r] & H^0(X,\O_D)\ar[rr] && \oplus H^0(X,\O_{D_i})\ar[rr]&& \oplus H^0(X,\O_{D_i\cap D_j})\ar[lllld] &\\
& H^1(X,\O_D)\ar[rr] && \oplus H^1(X,\O_{D_i}). &&& 
}
$$
Note that $H^1(X,\O_{D_i})=0$ by the special case of lemma for any $i$.
Therefore condition  $h^1(X,\O_D)=0$ is equivalent to 
the map
$$\a_D\colon  \oplus H^0(X,\O_{D_i})\to \oplus H^0(X,\O_{D_i\cap D_j})$$
being epimorphic. 

Let $E_a$ be a component of $D$, let $D'=D-E_a$. Clearly, $D'=\sum k'_iE_i$ where $k'_a=k_a-1$ and $k'_i=k_i$ for $i\ne a$. Denote by $\II$ the sheaf of ideals of the subscheme $E_a\subset X$. Then the sheaf of ideals of subscheme $D'_a\subset D_a$ is isomorphic to $\II^{k_a-1}/\II^{k_a}\cong (\II/\II^2)^{k_a-1}\cong \O_{E_a}(-r_a(k_a-1))$. Consider a commutative diagram
\begin{equation}
\tiny
\label{eq_aab}
\xymatrix{
0\ar[r] & H^0(X,\II^{k_a-1}/\II^{k_a}) \ar[r] \ar[d]^{\beta} & 
\oplus H^0(X,\O_{D_i}) \ar[r] \ar[d]^{\a_D} & 
\oplus H^0(X,\O_{D'_i}) \ar[r]\ar[d]^{\a_{D'}} & 0\\
0\ar[r] & \bigoplus\limits_{j: E_j\cap E_a\ne \emptyset, j\ne a}  H^0\left(\left(\II^{k_a-1}/\II^{k_a}\right)|_{D_j}\right) \ar[r] &
\oplus H^0(X,\O_{D_i\cap D_j}) \ar[r] & 
\oplus H^0(X,\O_{D'_i\cap D'_j}) \ar[r] & 0. 
}
\end{equation}
Consider exact sequence of sheaves
\begin{equation}
\label{eq_117}
0\to \II^{k_a-1}/\II^{k_a}\to \O_{k_aE_a}\to \O_{(k_a-1)E_a}\to 0.
\end{equation}
We claim that in its long sequence of cohomology one has $H^1(X,\II^{k_a-1}/\II^{k_a})=0$.
Indeed, $\II^{k_a-1}/\II^{k_a}\cong \O_{E_a}(-r_a(k_a-1))$ and $-r_a(k_a-1)\ge -1$, see the proof of the special case of the lemma. Therefore the first row of (\ref{eq_aab}) is exact. 
Since intersection $E_a\cap E_j$ is transversal by condition (1), restricting sequence (\ref{eq_117}) on $D_j$ ($=D'_j$) we get an exact sequence
$$0\to (\II^{k_a-1}/\II^{k_a})|_{D_j}\to \O_{D_a\cap D_j}\to \O_{D'_a\cap D'_j}\to 0$$
of sheaves supported in dimension $0$. Its $H^0$ make up an exact sequence, such sequences form the second row of~(\ref{eq_aab}). Therefore the second row of (\ref{eq_aab}) is also exact. 

Thus, diagram (\ref{eq_aab}) gives an exact sequence
$$\coker \b\to \coker \a_{D}\to \coker \a_{D'}.$$
We claim that $\a_{D'}$ is epimorphic. If $D'$ is connected, this is so by the induction hypothesis. If $D'=D'_{(1)}+\ldots+D'_{(m)}$ is a sum of connected components then $\a_{D'}$ is a sum of the maps $\a_{D'_{(i)}}$. Each of them is epimorphic by the induction hypothesis, hence $\a_{D'}$ is also epimorphic.
Consequently, if $\b$ is epimorphic then $\a_{D}$ also is epimorphic. Hence, to conclude the proof, it suffices to find a component $E_a$ of $D$ such that 
$\b$ is epimorphic.
Below we explain that such component exists.

Indeed, by assumption we have
$$
1\le \chi(\O_D) =-\frac12D(D+K_X)=-\frac12\sum_ik_iE_i(D+K_X)=\frac12\sum_ik_i\left(2+r_i-k_ir_i-\sum_{j\colon E_j\cap E_i\ne \emptyset, j\ne i}k_j\right)
$$
Therefore for some $a$ one has 
$2+r_a-k_ar_a-\sum_{j}k_j$
positive. Consequently,
$2-r_a(k_a-1)-\sum_{j}k_j\ge 1$
and 
$$\sum_{j\colon E_j\cap E_a\ne \emptyset, j\ne a}k_j\le 1-r_a(k_a-1).$$

We claim that $E_a$ is a suitable component. Indeed,  the map $\b$ has the form
$$H^0(E_a,\O_{E_a}(-r_a(k_a-1)))\to \oplus_j H^0(E_a, \O_{E_a}(-r_a(k_a-1))|_{k_jP_j})$$
where $P_j=E_j\cap E_a$. Choose an affine coordinate $x$ on $E_a\cong\P^1$ such that 
none of the points $P_j$ is $\infty$. Then $H^0(E_a,\O_{E_a}(-r_a(k_a-1)))$ is isomorphic to a subspace in $\k[x]$ of  polynomials of degree $\le -r_a(k_a-1)$. The  map 
$$H^0(E_a,\O_{E_a}(-r_a(k_a-1)))\to H^0(E_a, \O_{E_a}(-r_a(k_a-1))|_{k_jP_j})$$
is the evaluation of a polynomial and its derivatives of order $< k_j$ in the point $P_j$.
By Hermite interpolation, the map $\b$ is epimorphic if and only if 
$1-r_a(k_a-1) \ge \sum_{j}k_j$, what is true.
Thus the lemma is proved.
\end{proof}

\begin{corollary}
\label{corollary_positive}
Let $D$ be a left-orthogonal divisor on  a surface $X$ with $h^1(X,\O_X)=h^2(X,\O_X)=0$. Let $E\subset D$ be a component with multiplicity $k$ and self-intersection $r$.  
Then one of the following holds:
\begin{enumerate}
\item $r\le 0, k$ any,
\item $r=1,k=1$ or $2$,
\item $r>1,k=1$.
\end{enumerate}
\end{corollary}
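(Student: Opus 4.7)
The plan is to extract the corollary directly from condition~(2) of Theorem~\ref{th_slo} by specializing to the single-component subdivisor $D' := kE$. First I would verify that $kE$ qualifies as a test divisor: by Proposition~\ref{prop_tree} the component $E$ is isomorphic to $\P^1$ and so $kE$ (the subscheme cut out by $\II_E^k$) is a connected effective subdivisor, while $k$ being the multiplicity of $E$ in $D$ gives $0<kE\le D$. Consequently Theorem~\ref{th_slo}(2) forces $p_a(kE)\le 0$.

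Next I would compute $p_a(kE)$ using the formula $p_a(D')=1+\tfrac12 D'(D'+K_X)$ from Section~3.1 together with the adjunction identity $E\cdot K_X=-2-r$ from the lemma in Section~3.3. This gives
\begin{equation*}
p_a(kE)=1+\tfrac12\bigl(k^2r-k(r+2)\bigr)=1-\tfrac{k}{2}\bigl(2+r(1-k)\bigr),
\end{equation*}
so that $p_a(kE)\le 0$ is equivalent to the simple arithmetic inequality
\begin{equation*}
k\bigl(2+r(1-k)\bigr)\ge 2.
\end{equation*}

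Finally I would enumerate the integer solutions $(k,r)$ with $k\ge 1$. For $k=1$ the inequality reads $2\ge 2$, which is satisfied for every integer $r$, corresponding to cases (1)--(3) at $k=1$. For $k\ge 2$ I would rearrange the inequality as $r(k-1)\le 2-2/k$, i.e.\ $r\le 2/k$; since $r\in\Z$ and $k-1\ge 1$, this forces $r\le 1$ when $k=2$ and $r\le 0$ when $k\ge 3$. These three regimes are precisely cases (1), (2), (3) in the statement. I do not expect any genuine obstacle: the whole corollary is a direct specialization of Theorem~\ref{th_slo}(2) to a single component, and the only point to be careful with is tracking the sign of $1-k$ when rearranging the inequality for $k\ge 2$.
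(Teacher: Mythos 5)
Your proposal is correct and follows essentially the same route as the paper: the corollary is exactly the arithmetic extracted in the special case $D=kE$ inside the proof of Lemma~\ref{lemma_main1}, where the paper computes $\chi(\O_{kE})=\tfrac k2\bigl(2+r(1-k)\bigr)\ge 1$ (i.e.\ $p_a(kE)\le 0$ from Theorem~\ref{th_slo}(2)) and concludes $r(k-1)\le 1$. Your case enumeration via $r\le 2/k$ for $k\ge 2$ is equivalent and complete.
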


\section{Criterion for strong left-orthogonality}

\begin{theorem}
\label{th_slo2}
An effective divisor $D$ on a surface $X$ with $h^1(X,\O_X)=h^2(X,\O_X)=0$ is strongly left-orthogonal if and only if the following conditions hold
\begin{enumerate}
\item $D$ is a tree of projective lines.
\item
$p_a(D)=0$ and for any connected divisor $D'$ such that $0<D'\le D$ one has $p_a(D')\le 0$.
\item
For any connected divisor $D'$ such that $0<D'\le D$ one has $p_a(D')\le 1+D\cdot D'$.
\end{enumerate}
\end{theorem}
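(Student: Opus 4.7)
By Lemma~\ref{lemma_lo}.(2) and Theorem~\ref{th_slo}, strong left-orthogonality of $D$ is equivalent to (1)+(2) together with the vanishing $H^1(X,\O_D(D))=0$. It therefore suffices to show, under the standing assumption of (1)+(2), the equivalence
$$H^1(X,\O_D(D))=0 \ \Longleftrightarrow \ (3).$$

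\emph{Necessity of (3).} For any connected $0<D'\le D$, tensor the standard sequence $0\to \II_{D',D}\to \O_D\to \O_{D'}\to 0$ by $\O_X(D)$ and use the identification $\II_{D',D}=\II_{D'}/\II_D\cong \O_{D-D'}(-D')$ to get
$$0\to \O_{D-D'}(D-D')\to \O_D(D)\to \O_{D'}(D)\to 0.$$
Since the leftmost sheaf has support of dimension $\le 1$, its $H^2$ vanishes, so $H^1(X,\O_D(D))=0$ forces $H^1(X,\O_{D'}(D))=0$. A short Riemann--Roch computation (via $0\to \O_X(D-D')\to \O_X(D)\to \O_{D'}(D)\to 0$) gives $\chi(X,\O_{D'}(D))=1+D\cdot D'-p_a(D')$; combined with $h^0\ge 0$ and $h^2=0$, vanishing of $h^1$ forces $\chi\ge 0$, which is (3).

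\emph{Sufficiency.} Assume (1)+(2)+(3). The plan is to produce an ordering of the prime components of $D$ counted with multiplicity, $F_1,\dots,F_N$ with $N=\sum k_i$, such that the partial sums $D_l=F_1+\cdots+F_l$ satisfy $(D-D_{l-1})\cdot F_l\ge -1$ for every $1\le l\le N$. Granting such an ordering, the isomorphism $\II_{D_{l-1},D_l}\cong \O_{F_l}(-D_{l-1}\cdot F_l)$ (as in the necessity argument) produces short exact sequences
$$0\to \O_{F_l}\bigl((D-D_{l-1})\cdot F_l\bigr)\to \O_{D_l}(D)\to \O_{D_{l-1}}(D)\to 0.$$
Each leftmost term is a line bundle of degree $\ge -1$ on $F_l\cong\P^1$, hence has vanishing $H^1$. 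Induction on $l$ then yields $H^1(X,\O_{D_l}(D))=0$ for every $l$; the case $l=N$ is the desired vanishing.

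\emph{The main obstacle} is the construction of the ordering. For the first step, (3) applied to $D'=F_1$ (any prime component) directly gives $D\cdot F_1\ge -1$. In general, at step $l$ one must find a prime $F\le R:=D-D_{l-1}$ with $R\cdot F\ge -1$. Supposing for contradiction that every such $F$ satisfied $R\cdot F\le -2$, a weighted aggregation of the failing inequalities over the components of $R$, exploiting the tree structure from (1), should produce a connected sub-divisor $D'\le D$ violating (3). This combinatorial extraction of a suitable ordering from condition (3) is the main technical content of the proof.
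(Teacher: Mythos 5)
Your necessity direction is essentially the paper's own argument (the paper skips the identification $\II_{D',D}\cong\O_{D-D'}(-D')$ and just uses that $\II_{D',D}(D)$ has $1$-dimensional support, but the Riemann--Roch computation $\chi(\O_{D'}(D))=DD'+\chi(\O_{D'})$ and the conclusion are identical), so that half is fine. The sufficiency direction, however, has a genuine gap, and it sits exactly where you locate ``the main technical content.'' Your plan is to order the primes of $D$ counted with multiplicity so that each partial remainder $R=D-D_{l-1}$ contains a prime $F$ with $R\cdot F\ge -1$, and to prove existence of such an ordering by contradiction: if every prime $F\le R$ had $R\cdot F\le -2$, some connected $D'\le D$ would violate (3). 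That implication is false. Take $D=E_1+E_2+E_3$ a chain of three $\P^1$'s with self-intersections $-1,-3,-1$. All of (1), (2), (3) hold ($D\cdot E_1=D\cdot E_3=0$, $D\cdot E_2=-1$, $D\cdot(E_1+E_2)=D\cdot(E_2+E_3)=D\cdot D=-1$, and $p_a\equiv 0$ for reduced trees), so $D$ is strongly left-orthogonal. Yet the remainder $R=E_2$ is reachable by two perfectly legal peeling steps (remove $E_1$: $D\cdot E_1=0\ge-1$; remove $E_3$: $(E_2+E_3)\cdot E_3=0\ge-1$), and at $R=E_2$ every prime $F\le R$ has $R\cdot F=-3\le -2$ while no connected $D'\le D$ violates (3). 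So a stuck state does not yield the contradiction you want; the ordering, if it exists, must be chosen globally (here $E_2$ must be removed first), and you give no construction. Note also that condition (3) constrains $D\cdot D'$, whereas your peeling needs control of $R\cdot F$ for intermediate $R$, and no bridge between the two is supplied.

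For comparison, the paper avoids this issue by organizing the induction differently (Lemma~\ref{lemma_cond4}): it proves $h^1(\O_{D'}(D))=0$ for every \emph{connected} $0<D'\le D$ by induction on $D'$, using the Mayer--Vietoris-type sequence \eqref{eq_cutting} to reduce the statement for $D'$ to the statement for the connected components of $D'-E_a$ plus surjectivity of a Hermite-interpolation restriction map $\hat\b$ on $E_a$. The inequality needed for $\hat\b$ to be onto is $1+D\cdot E_a-r_a(k'_a-1)\ge\sum_{j}k'_j$ (sum over components of $D'$ meeting $E_a$), and a suitable $E_a$ is produced by averaging: $\chi(\O_{D'})+\tfrac12 DD'\ge\tfrac12$ (combining (2) and (3) \emph{applied to the current connected subdivisor $D'$}) decomposes as a sum over components, so some summand is positive. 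This is the step your proposal is missing an analogue of. In the example above, the paper's criterion forces removal of $E_1$ (or $E_3$) first and then handles $E_2$ and $E_3$ as separate connected pieces, each twisted by the full $D$, so no analogue of the stuck state arises. To repair your argument you would need to actually prove the existence of a good ordering (which is a nontrivial combinatorial statement, not a consequence of a local contradiction at a stuck state), or switch to a decomposition, like the paper's, in which the required inequality at each step is an instance of (2) and (3) for a connected subdivisor.
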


\begin{proof}
By Theorem~\ref{th_slo}, left-orthogonality of $D$ is equivalent to conditions (1) and (2). By Lemma~\ref{lemma_lo}.(2), we have to demonstrate (assuming left-orthogonality) that condition (3) is equivalent to $h^1(\O_D(D))=0$.

Let $D'\le D$ be an effective connected divisor. One has an exact sequence of sheaves 
$$0\to \II_{D',D}(D)\to \O_{D}(D)\to \O_{D'}(D)\to 0.$$
It gives an exact sequence 
$$0=H^1(X,\O_{D}(D))\to H^1(X,\O_{D'}(D))\to H^2(X,\II_{D',D}(D))=0$$
therefore $h^1(D',\O_{D'}(D))=0$ and $\chi(\O_{D'}(D))\ge 0$.
Also consider exact sequence 
$$0\to \O_X(D-D')\to \O_X(D)\to \O_{D'}(D)\to 0.$$
One has 
\begin{multline*}
0\le \chi(\O_{D'}(D))=\chi(\O_X(D))-\chi(\O_X(D-D'))=\\
=\frac12D(D-K_X)-\frac12(D-D')(D-D'-K_X)=DD'-\frac12(-D')(-D'-K_X)=DD'+\chi(\O_{D'}).
\end{multline*}
Hence, $p_a(\O_{D'})\le 1+DD'$.

Implication (3) $\Rightarrow$ ($h^1(\O_D(D))=0$) is proved in the next lemma.
\end{proof}

\begin{lemma}
\label{lemma_cond4}
In notation of Theorem~\ref{th_slo2}, suppose $D$ is left-orthogonal and for any 
connected divisor $D'$ such that $0<D'\le D$ one has $\chi(\O_{D'})+ DD'\ge 0$. Then $h^1(\O_{D}(D))=0$.
\end{lemma}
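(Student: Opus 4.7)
The plan is to prove, by induction on $\sum_i\widetilde k_i$, the slightly stronger statement that $h^1(X,\O_{\widetilde D}(D))=0$ for every effective divisor $\widetilde D\le D$; the lemma then follows on taking $\widetilde D=D$. The base case $\sum\widetilde k_i=1$ is immediate because $\O_{E_i}(D)\cong\O_{\P^1}(D\cdot E_i)$ and condition~(3) applied to $D'=E_i$ forces $D\cdot E_i\ge -1$. If $\widetilde D$ is disconnected I split it into connected components, each of smaller total multiplicity, and apply the induction to each. Hence one may assume $\widetilde D$ is connected with $\sum\widetilde k_i\ge 2$; set $\widetilde D_i=\widetilde k_iE_i$.

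Tensoring the cutting sequence~(\ref{eq_cutting}) for $\widetilde D$ by the locally free $\O_X(D)$ preserves exactness. A brief case analysis using Lemma~\ref{lemma_cond3}, Corollary~\ref{corollary_positive} and the bound $D\cdot E_i\ge -1$ shows $h^1(\O_{\widetilde D_i}(D))=0$ for every $i$, so the long exact sequence identifies $h^1(\O_{\widetilde D}(D))$ with $\coker\alpha_{\widetilde D}$, where
$$\alpha_{\widetilde D}\colon\bigoplus_iH^0(\O_{\widetilde D_i}(D))\to\bigoplus_{i<j}H^0(\O_{\widetilde D_i\cap\widetilde D_j}(D))$$
is the cutting map. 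To prove $\alpha_{\widetilde D}$ surjective, I mimic the diagram~(\ref{eq_aab}): choose a component $E_a$ of $\widetilde D$, set $\widetilde D'=\widetilde D-E_a$, and twist the sequence~(\ref{eq_117}) by $\O_X(D)$. Its top row is exact once $D\cdot E_a-(\widetilde k_a-1)r_a\ge -1$; this follows from Corollary~\ref{corollary_positive} and condition~(3) applied to $D'=\widetilde k_aE_a$. The second row is exact because its sheaves are supported in dimension zero. A diagram chase reduces the surjectivity of $\alpha_{\widetilde D}$ to surjectivity of $\alpha_{\widetilde D'}$ (handled by the inductive hypothesis, decomposing $\widetilde D'$ into connected components if necessary) and of the Hermite interpolation map
$$\beta\colon H^0(\P^1,\O(c_a))\to\bigoplus_j H^0(\O_{\widetilde k_jP_j}),\qquad c_a=D\cdot E_a-(\widetilde k_a-1)r_a,$$
where $j$ ranges over components of $\widetilde D$ meeting $E_a$ with $j\ne a$ and $P_j=E_j\cap E_a$.

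The main obstacle is to choose $E_a$ so that $\beta$ is surjective. By Hermite interpolation this amounts to $c_a+1\ge\sum_j\widetilde k_j$, which a direct substitution rewrites as
$$V_a\;:=\;(D-\widetilde D)\cdot E_a+r_a+1\;\ge\;0.$$
Existence of such an $a$ will be proved by a weighted averaging. Expanding $\sum_{a\in\widetilde D}\widetilde k_aV_a$ and using $\widetilde D\cdot K_X=-2K_{\widetilde D}-\sum_a\widetilde k_ar_a$ together with $\chi(\O_{\widetilde D})=-\tfrac12\widetilde D(\widetilde D+K_X)$ gives the identity
$$\sum_{a\in\widetilde D}\widetilde k_aV_a=2\chi(\O_{\widetilde D})+D\cdot\widetilde D-K_{\widetilde D},\qquad K_{\widetilde D}:=\sum_i\widetilde k_i.$$
Condition~(3) applied to $\widetilde D$ gives $\chi(\O_{\widetilde D})+D\widetilde D\ge 0$, while condition~(2) gives $\chi(\O_{\widetilde D})\ge 1$; adding, $2\chi(\O_{\widetilde D})+D\widetilde D\ge 1$, whence $\sum_a\widetilde k_aV_a\ge 1-K_{\widetilde D}$. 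If every $V_a$ were $\le -1$, the sum would be $\le-K_{\widetilde D}$, contradicting $1-K_{\widetilde D}>-K_{\widetilde D}$. Hence some $V_a\ge 0$, which is exactly what is needed to complete the induction.
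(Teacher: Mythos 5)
Your proof is correct and follows essentially the same route as the paper's: the same cutting-sequence diagram, the same Hermite-interpolation criterion for the surjectivity of $\beta$, and the same averaging of $\chi(\O_{\widetilde D})+D\widetilde D\ge 0$ with $\chi(\O_{\widetilde D})\ge 1$ to produce a component $E_a$ with $V_a\ge 0$ (your $V_a\ge 0$ is exactly the paper's inequality (\ref{eq_hermite})). The only differences are cosmetic: you induct on total multiplicity of arbitrary $\widetilde D\le D$ rather than on connected subdivisors, and you package the key inequality as the identity $\sum_a\widetilde k_aV_a=2\chi(\O_{\widetilde D})+D\cdot\widetilde D-\sum_i\widetilde k_i$.
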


\begin{proof}
The proof is similar to the proof of Lemma~\ref{lemma_main1}.
We will prove by induction in $D'$ that for any connected divisor $D'$ such that $0<D'\le D$ one has $h^1(\O_{D'}(D))=0$.
Denote $b_i=D\cdot E_i$. By assumptions of lemma, one has $\chi(\O_{E_i})+D\cdot E_i\ge 0$, hence $b_i\ge -\chi(\O_{E_i})=-1$. 

As a base of induction we consider the case $D'=D_i=k_iE_i$. For brevity we drop off index $i$ below. We have $\O_{D'}(D)\cong \O_{kE}(b)$ and we claim that vanishing 
$h^1(\O_{kE}(b))=0$ follows from Lemma~\ref{lemma_cond3}. Indeed, for $r\le 0$ the first condition of Lemma~\ref{lemma_cond3} holds. And for $r>0$ we have $b=D\cdot E>0$ so by Corollary~\ref{corollary_positive} conditions 2 or 3 hold.

For the induction step, consider a connected  divisor $D'=\sum k'_iE_i$ such that $0<D'\le D$.
Let $E_a$ be a component of $D'$, let $D''=D'-E_a$.  Denote by $\II$ the sheaf of ideals of the subscheme $E_a\subset X$. Then the sheaf of ideals of subscheme $D''_a\subset D'_a$ is isomorphic to $\II^{k'_a-1}/\II^{k'_a}\cong (\II/\II^2)^{k'_a-1}\cong \O_{E_a}(-r_a(k'_a-1))$. Consider a commutative diagram
\begin{equation}
\tiny
\label{eq_hataab}
\xymatrix{
0\ar[r] & H^0(X,\II^{k'_a-1}/\II^{k'_a}(D)) \ar[r] \ar[d]^{\hat\beta} & 
\oplus H^0(X,\O_{D'_i}(D)) \ar[r] \ar[d]^{\hat\a_{D'}} & 
\oplus H^0(X,\O_{D''_i}(D)) \ar[r]\ar[d]^{\hat\a_{D''}} & 0\\
0\ar[r] & \bigoplus\limits_{j: E_j\cap E_a\ne \emptyset, j\ne a}  H^0\left(\left(\II^{k'_a-1}/\II^{k'_a}(D)\right)|_{D'_j}\right) \ar[r] &
\oplus H^0(X,\O_{D'_i\cap D'_j}(D)) \ar[r] & 
\oplus H^0(X,\O_{D''_i\cap D''_j}(D)) \ar[r] & 0. 
}
\end{equation}
Consider exact sequence of sheaves
\begin{equation*}
0\to \II^{k'_a-1}/\II^{k'_a}(D)\to \O_{k'_aE_a}(D)\to \O_{(k'_a-1)E_a}(D)\to 0.
\end{equation*}
In its long sequence of cohomology one has $H^1(X,\II^{k'_a-1}/\II^{k'_a}(D))=0$. Indeed,
$\II^{k'_a-1}/\II^{k'_a}(D)\cong \O_{E_a}(b_a-r_a(k'_a-1))$. We claim that $b_a-r_a(k'_a-1)\ge -1$, but this is essentially proven in the induction base.
Therefore the first row of (\ref{eq_hataab}) is exact. 
The second row of (\ref{eq_hataab}) is also exact, see the proof of Lemma~\ref{lemma_main1}. 

Thus, diagram (\ref{eq_hataab}) gives an exact sequence
$$\coker \hat\b\to \coker \hat\a_{D'}\to \coker \hat\a_{D''}.$$
We claim that $\hat\a_{D''}$ is epimorphic. That can be deduced applying the induction hypothesis to connected components of $D''$, see the proof of Lemma~\ref{lemma_main1}.
Consequently, if $\hat\b$ is epimorphic then $\hat\a_{D'}$ also is epimorphic. Hence, to conclude the proof, it suffices to find a component $E_a$ of $D'$ such that 
$\hat\b$ is epimorphic.
Below we explain that such component exists.

One has $\II^{k'_a-1}/\II^{k'_a}(D)\cong\O_{E_a}(b_a-r_a(k'_a-1))$. Hence the map $\hat\b$ is the restriction map
$$H^0(E_a,\O_{E_a}(b_a-r_a(k'_a-1)))\to \oplus_j H^0(E_a, \O_{E_a}(b_a-r_a(k'_a-1))|_{k'_jP_j})$$
where $P_j=E_j\cap E_a$. By Hermite interpolation (see the proof of Lemma~\ref{lemma_main1}),  the map~$\hat\b$ is epimorphic if and only if 
\begin{equation}
\label{eq_hermite}
1+b_a-r_a(k'_a-1) \ge \sum_{j\colon E_j\cap E_a\ne\emptyset, j\ne a}k'_j.
\end{equation}
Let us check that (\ref{eq_hermite}) holds for some $a$.

By assumption we have $\chi(\O_{D'})+DD'\ge 0$ and by left-orthogonality of $D$ (see Theorem~\ref{th_slo}) we have $\chi(\O_{D'})\ge 1$. Therefore 
\begin{multline*}
\frac12\le \chi(\O_{D'})+\frac12 D D' =\frac12D'(-D'-K_X+D)=
\frac12\sum_ik'_iE_i(D-K_X-D')=\\
=\frac12\sum_ik'_i\left(b_i+2+r_i-k'_ir_i -\sum_{j\colon E_j\cap E_i\ne \emptyset, j\ne i}k'_j\right).
\end{multline*} 

Therefore at least one summand on the right is positive. One has 
$$1\le b_a+2+r_a-k'_ar_a -\sum_{j\colon E_j\cap E_a\ne \emptyset, j\ne a}k'_j$$ 
for some $a$. Hence (\ref{eq_hermite}) holds.
\end{proof}

\section{Examples and remarks}

Let $D=\sum_ik_iE_i$ be a tree of projective lines and $r_i=E_i^2$.
Then $p_a(D)$ can be calculated explicitly: 
\begin{multline}
\label{eq_genus}
p_a(D)=1+\frac12D(D+K_X)=1+\frac12\left(\sum_ik_i^2r_i+2\sum_{i<j\colon E_i\cap E_j\ne \emptyset}k_ik_j-\sum_ik_i(2+r_i)\right)=\\
=1+\sum_i\left(r_i\frac{k_i(k_i-1)}2-k_i\right)-\sum_{i<j\colon E_i\cap E_j\ne \emptyset}k_ik_j.
\end{multline}

\begin{corollary}
\label{corollary_magicformula}
Let $D=\sum_i k_iE_i\ge 0$ be a left-orthogonal effective divisor on a surface~$X$ with $h^1(X,\O_X)=h^2(X,\O_X)=0$. Then one has 
\begin{equation}
\label{eq_magicformula}
1+\frac12D^2=\sum_{i}\left(k_i\left(1+\frac12  r_i\right)\right).
\end{equation}
\end{corollary}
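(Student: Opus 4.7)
The plan is to deduce the identity directly from $p_a(D)=0$ (which is part of the left-orthogonality criterion of Theorem~\ref{th_slo}) together with the adjunction formula applied to each component of $D$. No induction or cohomological computation is needed beyond what is already established in the paper.

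First I would invoke Theorem~\ref{th_slo}: since $D$ is left-orthogonal, $D$ is a tree of projective lines and $p_a(D)=0$. Using the formula $p_a(D)=1+\tfrac12 D(D+K_X)$ recalled in the preliminaries, the vanishing $p_a(D)=0$ is equivalent to
\begin{equation*}
D^2+D\cdot K_X=-2.
\end{equation*}

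Next I would compute $D\cdot K_X$ componentwise. Each component $E_i$ is isomorphic to $\P^1$, so by the adjunction lemma (the ``Intersection with canonical divisor'' lemma in Section~3) one has $E_i\cdot K_X=-2-r_i$. Writing $D=\sum_i k_iE_i$ and taking the intersection with $K_X$ gives
\begin{equation*}
D\cdot K_X=\sum_i k_i(E_i\cdot K_X)=-\sum_i k_i(2+r_i).
\end{equation*}
Substituting this into $D^2+D\cdot K_X=-2$ yields $D^2=\sum_i k_i(2+r_i)-2$, which is equivalent to~(\ref{eq_magicformula}) after dividing by $2$ and moving the constant to the left-hand side.

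There is essentially no obstacle; the only point that must be checked is that the input $p_a(D)=0$ is really available, which is exactly the content of condition~(2) in Theorem~\ref{th_slo}. One could alternatively derive the formula by setting $p_a(D)=0$ in the explicit expression~(\ref{eq_genus}) and noting that $D^2=\sum_i k_i^2 r_i+2\sum_{i<j,\,E_i\cap E_j\neq\emptyset}k_ik_j$ since the tree condition makes all pairwise intersection numbers equal to $0$ or $1$; both routes give the same answer.
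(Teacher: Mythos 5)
Your proposal is correct and matches the paper's argument: the paper's one-line proof ("the equation is equivalent to $p_a(D)=0$") relies on exactly the expansion of $p_a(D)=1+\tfrac12 D(D+K_X)$ via adjunction on the $\P^1$-components that you carry out explicitly, with $p_a(D)=0$ supplied by Theorem~\ref{th_slo}. You have simply written out the computation that the paper leaves implicit in formula~(\ref{eq_genus}).
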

\begin{proof}
Above equation is equivalent to $p_a(D)=0$.
\end{proof}

\begin{predl}
\label{prop_tree0}
Let $D$ be a reduced tree of projective lines. Then $p_a(D)=0$.
\end{predl}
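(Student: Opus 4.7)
The plan is to compute $\chi(\O_D)$ directly from the short exact sequence (\ref{eq_cutting}), rather than by plugging into the explicit expression (\ref{eq_genus}). A reduced tree of projective lines is precisely the setting in which (\ref{eq_cutting}) is both exact and combinatorially transparent, so this route is cleaner than tracking quadratic terms by hand.

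Concretely, I write $D = E_1 + \cdots + E_n$ with each $E_i \cong \P^1$. The tree hypothesis guarantees that distinct components meet transversally at a single point (when they meet at all) and that no three of them are concurrent, since three components through a common point would produce a triangle in the graph on $\{E_1, \ldots, E_n\}$, contradicting the tree property. Hence (\ref{eq_cutting}) applied with $C_i = E_i$ is exact and takes the form
\begin{equation*}
0 \to \O_D \to \bigoplus_{i=1}^n \O_{E_i} \to \bigoplus_{i<j,\; E_i \cap E_j \ne \emptyset} \O_{E_i \cap E_j} \to 0,
\end{equation*}
where each right-hand summand is the structure sheaf of a single reduced point.

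Taking Euler characteristics then yields $\chi(\O_D) = \sum_i \chi(\O_{E_i}) - e$, where $e$ is the number of pairs $(i,j)$ with $i<j$ and $E_i \cap E_j \ne \emptyset$. Since $\chi(\O_{\P^1}) = 1$ and the graph on $\{E_1, \ldots, E_n\}$ is a tree with $n-1$ edges, this gives $\chi(\O_D) = n - (n-1) = 1$, and therefore $p_a(D) = 1 - \chi(\O_D) = 0$.

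I do not expect any genuine obstacle. The argument is the sheaf-theoretic incarnation of the combinatorial identity ``vertices minus edges equals $1$'' for a tree, and the only small points requiring care are the exactness of (\ref{eq_cutting}) (guaranteed by the absence of triple intersections) and the identification of the rightmost term as a direct sum of $n-1$ skyscraper sheaves, one per edge of the tree.
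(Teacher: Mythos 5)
Your proof is correct, but it takes a genuinely different route from the paper. The paper's proof is a one-line appeal to the numerical formula (\ref{eq_genus}): plugging $k_i=1$ into the adjunction/Riemann--Roch expression $p_a(D)=1+\tfrac12 D(D+K_X)$ makes the quadratic terms vanish and leaves $1-n+(n-1)=0$, where $n$ is the number of components. You instead compute $\chi(\O_D)$ sheaf-theoretically from the normalization sequence (\ref{eq_cutting}) and additivity of the Euler characteristic, reducing the claim to the combinatorial identity ``vertices minus edges equals $1$'' for a connected tree. Both arguments are short and both hinge on the same two consequences of the tree hypothesis (pairwise transversal intersections in single points, no triple points --- you correctly note that concurrency of three components would create a triangle, hence a cycle, in the dual graph). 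What your route buys is independence from the intersection-theoretic bookkeeping: you never need $E_i\cdot K_X=-2-r_i$ or the expansion of $D^2$, and your argument is the degenerate ($k_i=1$) case of the machinery the paper already deploys in Proposition~\ref{prop_tree} and Lemma~\ref{lemma_main1}, so it fits naturally into the surrounding text. What the paper's route buys is that formula (\ref{eq_genus}) is stated for arbitrary multiplicities $k_i$, so the reduced case genuinely is an immediate specialization once that formula is in hand. Neither approach has a gap; the one point worth making explicit in yours (as you do) is that (\ref{eq_cutting}) is exact precisely because no three components are concurrent.
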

\begin{proof}
It follows from formula~(\ref{eq_genus}).
\end{proof}

\begin{predl}
\label{prop_slored}
Let $D=\sum_i E_i$ be a reduced tree of projective lines on a surface $X$ with $h^1(X,\O_X)=h^2(X,\O_X)=0$. Then 
\begin{enumerate}
\item $D$ is left-orthogonal.
\item $D$ is strongly left-orthogonal if and only if $D\cdot D'\ge -1$ for any connected divisor $D'$ such that $0<D'\le D$.
\end{enumerate}
\end{predl}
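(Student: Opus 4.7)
The plan is to deduce both parts directly from the criteria established in Theorems~\ref{th_slo} and~\ref{th_slo2}, with Proposition~\ref{prop_tree0} supplying all the arithmetic genus information for free.

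For part (1), I verify the two hypotheses of Theorem~\ref{th_slo}. Condition (1) there holds by assumption. For condition (2), I first observe that any connected effective $D'$ with $0 < D' \le D$ is itself a reduced tree of projective lines: since $D$ is reduced, $D'$ is reduced; and a connected subgraph of a tree is again a tree, whose vertices are $\P^1$'s meeting transversally at the inherited intersection points. Then Proposition~\ref{prop_tree0} applied to $D'$ (and to $D$ itself) gives $p_a(D') = 0$, in particular $p_a(D) = 0$ and $p_a(D') \le 0$. Hence $D$ is left-orthogonal by Theorem~\ref{th_slo}.

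For part (2), I apply Theorem~\ref{th_slo2}. Conditions (1) and (2) of that theorem are exactly what was checked in part (1). It remains to show that condition (3) there, namely $p_a(D') \le 1 + D \cdot D'$ for every connected $0 < D' \le D$, is equivalent to the stated inequality $D \cdot D' \ge -1$. But by the same argument as above, every such $D'$ is a reduced tree of projective lines, so $p_a(D') = 0$ by Proposition~\ref{prop_tree0}. Thus condition (3) becomes $0 \le 1 + D \cdot D'$, i.e.\ $D \cdot D' \ge -1$, and the equivalence is immediate.

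There is really no hard step here: the whole content is in Theorems~\ref{th_slo} and~\ref{th_slo2}, and reducedness collapses all arithmetic genera to $0$ via Proposition~\ref{prop_tree0}. The only small point to keep straight is the closure property that a connected subdivisor of a reduced tree of projective lines is again a reduced tree of projective lines, which is purely graph-theoretic.
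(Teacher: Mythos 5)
Your proposal is correct and follows exactly the paper's own argument: apply Theorems~\ref{th_slo} and~\ref{th_slo2} and use Proposition~\ref{prop_tree0} to conclude $p_a(D')=0$ for every connected subdivisor, so that condition (3) of Theorem~\ref{th_slo2} reduces to $D\cdot D'\ge -1$. Your explicit remark that a connected subdivisor of a reduced tree of projective lines is again such a tree is a small point the paper leaves implicit, but the route is the same.
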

\begin{proof}

\begin{enumerate}
\item We use Theorem~\ref{th_slo}. Clearly, condition (1) holds. We have  $p_a(D')=0$ for any connected subdivisor in $D$, see Proposition~\ref{prop_tree0}. Therefore, condition (2) also holds and $D$ is left-orthogonal.

\item We use Theorem~\ref{th_slo2}. For a connected subdivisor $D'$ of $D$, one has $p_a(D')=0$. Thus condition (3) has the form $D\cdot D'\ge -1$.
\end{enumerate}
\end{proof}

\begin{predl}
Let $X$ be a surface with $h^1(X,\O_X)=h^2(X,\O_X)=0$. Let $D=kE$ be a projective line with multiplicity $k$ and $E^2=r$. Then 
\begin{enumerate}
\item $D$ is left-orthogonal if and only if one of the following conditions is satisfied:
\begin{enumerate}
\item $k=1$, 
\item $k=2, r=1$.
\end{enumerate}

\item $D$ is strongly left-orthogonal if and only if one of the following conditions is satisfied:
\begin{enumerate}
\item $k=1, r\ge -1$, 
\item $k=2, r=1$.
\end{enumerate}
\end{enumerate}
\end{predl}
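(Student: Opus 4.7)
The plan is to apply Theorems~\ref{th_slo} and~\ref{th_slo2} directly. For $D = kE$ with $E \cong \P^1$, condition (1) (tree of projective lines) is automatic, and the connected subdivisors $0 < D' \le D$ are precisely the multiples $D' = k'E$ with $1 \le k' \le k$. Thus everything reduces to checking the conditions on arithmetic genera, for which I would use formula~(\ref{eq_genus}) specialised to a single component:
\[
p_a(k'E) = 1 - k' + r\binom{k'}{2}.
\]

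For part (1), condition (2) of Theorem~\ref{th_slo} asks that $p_a(kE) = 0$ and $p_a(k'E) \le 0$ for every $1 \le k' \le k$. I would split by the sign of $r$. If $r \le 0$ and $k \ge 2$, then $p_a(kE) \le 1 - k \le -1$, violating the equality $p_a(kE) = 0$, so $k = 1$. If $r = 1$, then $p_a(kE) = \binom{k-1}{2}$, which vanishes precisely when $k \in \{1, 2\}$ and is positive otherwise; in the surviving subcases $p_a(k'E) = 0$ for all $k' \le k$. If $r \ge 2$, then $p_a(2E) = r - 1 \ge 1$, forcing $k = 1$. Collecting the surviving cases gives exactly the list in (1).

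For part (2), I would assume (1) and impose condition (3) of Theorem~\ref{th_slo2}: $p_a(k'E) \le 1 + D \cdot (k'E) = 1 + kk'r$ for every $1 \le k' \le k$. In the case $(k,r) = (2,1)$ the left-hand side is $0$ while the right-hand side is $1 + 2k' \ge 3$, so (3) holds unconditionally. In the case $k = 1$ the sole connected subdivisor is $E$ itself, and the condition collapses to $0 \le 1 + r$, i.e.\ $r \ge -1$. This yields the two strongly left-orthogonal families. The argument presents no real obstacle once the preceding theorems are in hand; the whole proof is straightforward bookkeeping around the single-component genus formula.
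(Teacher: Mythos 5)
Your proof is correct and is exactly the intended argument: the paper states this proposition without proof in the examples section, as it follows directly from Theorems~\ref{th_slo} and~\ref{th_slo2} together with the single-component genus formula $p_a(k'E)=1-k'+r\binom{k'}{2}$, which is precisely the bookkeeping you carry out. All the case distinctions and the reduction of condition (3) to $r\ge -1$ when $k=1$ check out.
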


\begin{example}
Let $D$ be chain of $n$ smooth rational curves with self-intersections $-2$. Such $D$ is an exceptional divisor of the minimal resolution of Du Val singular point of type $A_{n}$.
Then $D$ is left-orthogonal but not strongly left-orthogonal. The only subdivisor $D'$ of $D$ such that condition $DD'\ge -1$ of Proposition~\ref{prop_slored} fails is $D'=D$. One has $D^2=-2$.
\end{example}

\begin{example}
Let $D=\sum_{i=1}^5E_i$ be a chain of smooth rational curves with self-intersections $0,-3,-2,-3,0$. Then $D$ is left-orthogonal but not strongly left-orthogonal. The only subdivisor $D'$ of $D$ such that condition $DD'\ge -1$ of Proposition~\ref{prop_slored} fails is $D'=E_2+E_3+E_4$. One has $DD'=-2$.
\end{example}

\bigskip
\begin{remark}
Let $D=\sum_i k_iE_i$ be a tree of projective lines with self-intersections $r_i$. Note that condition (2) of left-orthogonality from Theorem~\ref{th_slo} due to the formula (\ref{eq_genus}) can be expressed as a system of equalities and inequalities involving values $k_i$ and $r_i$. But not any collection of numbers satisfying the above system is realized by some left-orthogonal divisor on a surface. For example, $D=E_1+E_2$ with $E_1\cdot E_2=1$, $E_1^2=E_2^2=2$ cannot be a divisor on a surface due to Hodge index theorem. 
\end{remark}

In fact, Hodge index theorem imposes rather strong conditions on components with positive self-intersection of a left-orthogonal divisor which we present below. 

\begin{predl}
\label{prop_twopositive}
Let $D$ be a left-orthogonal effective divisor on a surface $X$ with $h^1(X,\O_X)=h^2(X,\O_X)=0$. Let $E_1$ and $E_2$ be two prime components of $D$ such that $r_1,r_2>0$. Then $D=E_1+E_2$ and $r_1=r_2=1$.

Therefore, $D$ can contain at most two components with positive self-intersection. Moreover, $D$ contains two positive components only in the case $D=E_1+E_2, E_i^2=1, D^2=4$.
\end{predl}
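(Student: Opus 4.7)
My plan is to proceed in three steps: first apply the Hodge index theorem to the pair $\{E_1,E_2\}$ to force $r_1=r_2=E_1\cdot E_2=1$; then rule out higher multiplicities of $E_1$ and $E_2$ using the arithmetic genus bound of Theorem~\ref{th_slo}; and finally, as the main step, rule out any further prime component of $D$ via a $3\times 3$ Hodge index argument.

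For the first step, I invoke Proposition~\ref{prop_tree} to conclude that $D$ is a tree of projective lines, so $E_1\cdot E_2\in\{0,1\}$. Since the intersection form on $\mathrm{NS}(X)_{\mathbb{R}}$ has signature $(1,\rho-1)$ by Hodge index, no $2$-dimensional sublattice is positive definite. Applied to $\{E_1,E_2\}$ this gives $r_1 r_2 - (E_1\cdot E_2)^2 \le 0$, so $r_1 r_2 \le 1$; combined with $r_1,r_2 \ge 1$ this forces $r_1=r_2=1$ and $E_1\cdot E_2=1$.

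For the multiplicities, Corollary~\ref{corollary_positive} restricts each $k_i$ to lie in $\{1,2\}$. If $k_1=2$, then $D' = 2E_1 + E_2$ is a connected subdivisor of $D$, and adjunction ($E_i \cdot K_X = -2 - r_i = -3$) yields $D'^2 = 9$ and $D' \cdot K_X = -9$, hence $p_a(D') = 1$, violating condition~(2) of Theorem~\ref{th_slo}. Thus $k_1 = k_2 = 1$.

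The main (and hardest) step is to show $D = E_1 + E_2$. Suppose not; then connectedness of the tree $D$ provides a prime component $F \ne E_1, E_2$ of $D$ adjacent to $E_1 + E_2$, and the tree/transversality structure forces $F \cdot E_1 = 1$, $F \cdot E_2 = 0$ (after possibly swapping $E_1 \leftrightarrow E_2$). The Gram matrix of $\{E_1, E_2, F\}$ in $\mathrm{NS}(X)_{\mathbb{R}}$ is
\[
\begin{pmatrix} 1 & 1 & 1 \\ 1 & 1 & 0 \\ 1 & 0 & r_F \end{pmatrix},
\]
whose determinant is $-1$ independent of $r_F$. Its upper-left $2\times 2$ block has eigenvalues $\{2, 0\}$, so Cauchy interlacing together with the nonzero determinant forces the full matrix to have signature $(2, 1)$. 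Two positive eigenvalues on a sublattice contradict the Hodge index theorem. Hence $D = E_1 + E_2$, and $D^2 = r_1 + r_2 + 2 E_1 \cdot E_2 = 4$.
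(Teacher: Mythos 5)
Your proof is correct and follows essentially the same route as the paper: Hodge index on the pair $(E_1,E_2)$ forces $r_1=r_2=E_1\cdot E_2=1$, Hodge index on a three-component configuration rules out any further prime component, and the genus condition of Theorem~\ref{th_slo} kills higher multiplicities. The only (harmless) variations are that you compute the signature of the full $3\times 3$ Gram matrix via Cauchy interlacing where the paper instead exhibits a positive definite $2\times 2$ Gram matrix of $E_2$ and $E_1+cE_3$ for small $c>0$, and that you apply the bound $p_a(D')\le 0$ to the subdivisor $D'=2E_1+E_2$ where the paper uses the relation $p_a(D)=0$ for $D=k_1E_1+k_2E_2$.
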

\begin{proof}
The Gram matrix $G$ of the intersection form on vectors $E_1,E_2\in NS(X)$ is either 
$$\begin{pmatrix} r_1& 1\\
1& r_2
\end{pmatrix} \quad\text{or}\quad \begin{pmatrix} r_1& 0\\
0& r_2
\end{pmatrix}.$$
It follows from Hodge index theorem that $G$ cannot be positive definite.
Hence, the second case is impossible. The first case is possible only if $\det G=r_1r_2-1\le 0$. Therefore $r_1r_2=1$ and $r_1=r_2=1$.

Suppose $D$ has more than two irreducible components. Since components of $D$ form a tree, we can choose a component $E_3$ such that $E_1\cdot E_3=1, E_2\cdot E_3=0$. Then 
the Gram matrix of the intersection form on vectors $E_2,E_1+cE_3\in NS(X)\otimes \Q$ is
$$
\begin{pmatrix} 1& 1 \\
1& 1+2c+c^2r_3
\end{pmatrix}.
$$
Its corner minors are $1$ and $c(2+cr_3)$. If one takes $c>0$ and small enough, this matrix is positive definite by Sylvester's criterion. That gives a contradiction to Hodge index theorem.

Therefore $D=k_1E_1+k_2E_2$. By Corollary~\ref{corollary_magicformula} we have
$$1+\frac12(k_1+k_2)^2=2+\frac12(k_1^2+k_2^2).$$
It follows that $k_1k_2=1$, hence $k_1=k_2=1$.
\end{proof}

\begin{corollary}
Let $D$ be an effective left-orthogonal divisor on a surface $X$ with $h^1(X,\O_X)=h^2(X,\O_X)=0$. Suppose $E$ is a prime component of $D$ with multiplicity $>1$ and $r=E^2>0$. Then $D=2E$, $E^2=1$.
\end{corollary}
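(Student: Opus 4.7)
The plan is to split the conclusion into the two statements $E^2 = 1$ and $D = 2E$, and handle them one after the other using results already established in the paper.

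For the first part, I would apply Corollary~\ref{corollary_positive} directly to the component $E$. The hypotheses say $k > 1$ and $r > 0$, so neither case (1) nor case (3) of that corollary is available; we are forced into case (2), and moreover $k = 1$ is excluded, leaving only $r = 1$ and $k = 2$. This already pins down the self-intersection and multiplicity of $E$.

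The remaining task is to show that $E$ is the only component of $D$. I would argue by contradiction: assume $D \ne 2E$. Since $D$ is a tree of projective lines (Theorem~\ref{th_slo}) and the tree is connected, there exists a prime component $E' \ne E$ meeting $E$, with $E \cdot E' = 1$. Set $D' = 2E + E'$; then $D'$ is a connected effective divisor with $0 < D' \le D$, so condition~(2) of Theorem~\ref{th_slo} forces $p_a(D') \le 0$.

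The key computation, using $E^2 = 1$, $E \cdot E' = 1$, and adjunction $K_X \cdot E = -3$, $K_X \cdot E' = -2 - (E')^2$, is
\[
(D')^2 + D' \cdot K_X = (8 + (E')^2) + (-8 - (E')^2) = 0,
\]
so $p_a(D') = 1 + \tfrac12(0) = 1 > 0$, contradicting $p_a(D') \le 0$. Thus no component other than $E$ can occur, and $D = 2E$ as claimed. The only mild obstacle is selecting the right sub-divisor $D'$; the cancellation of $(E')^2$ in the arithmetic genus formula makes the argument independent of the self-intersection of the extra component, which is precisely why no further constraint on $E'$ is needed.
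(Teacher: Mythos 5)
Your proof is correct, but it takes a genuinely different route from the paper's. You obtain $r=1$, $k=2$ immediately from Corollary~\ref{corollary_positive}, and then rule out any further component by exhibiting the connected subdivisor $D'=2E+E'$ (where $E'$ is a neighbour of $E$ in the tree, so $E\cdot E'=1$) and computing $p_a(D')=1+\tfrac12\bigl((8+(E')^2)+(-8-(E')^2)\bigr)=1>0$, contradicting condition (2) of Theorem~\ref{th_slo}; the cancellation of $(E')^2$ indeed makes this independent of the extra component's self-intersection. The paper instead first shows that $E$ itself is strongly left-orthogonal, uses Riemann--Roch to get $h^0(X,\O_X(E))=2+E^2\ge 2$, replaces $E$ by a linearly equivalent prime divisor $E'$ to produce a left-orthogonal divisor $D-E+E'$ with two components of positive self-intersection, and then invokes Proposition~\ref{prop_twopositive} (whose proof rests on the Hodge index theorem). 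Your argument is more elementary and self-contained: it needs neither the mobility of $E$ in its linear system nor the Hodge index theorem, only the numerical criterion of Theorem~\ref{th_slo} together with Corollary~\ref{corollary_positive}. What the paper's approach buys in exchange is the link to the structural statement of Proposition~\ref{prop_twopositive} about divisors with two positive components. (One incidental remark: if you had instead computed $p_a(2E+E')$ via the displayed formula~(\ref{eq_genus}), you would have gotten $-3$; that formula has a sign typo in its last term, which should read $+\sum_{i<j}k_ik_j$, as the reduced-tree check confirms. Your direct intersection-theoretic computation is the correct one.)
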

\begin{proof}
We claim that $E$ is linearly equivalent to some other prime divisor. Indeed, $O_E$ is a quotient-sheaf of $\O_D$, hence $h^1(X,\O_E)=0$. Clearly, $h^0(X,\O_E)=1$ therefore by Lemma~\ref{lemma_lo}.(1) divisor $E$ is left-orthogonal. Also, $h^1(X,\O_E(E))\cong h^1(E,\O_E(r))=0$. Thus by Lemma~\ref{lemma_lo}.(2) divisor $E$ is strongly left-orthogonal.

By Riemann-Roch formula we have
\begin{align*}
\chi(X,E)&=\chi(\O_X)+\frac{E(E-K_X)}2=1+\frac{E\cdot E-E\cdot K_X}2;\\
0=\chi(X,-E)&=\chi(\O_X)+\frac{-E(-E-K_X)}2=1+\frac{E\cdot E+E\cdot K_X}2;
\end{align*}
hence $h^0(X,E)=\chi(X,E)=2+E^2\ge 2$. It follows that there exists a prime divisor  $E'\ne E$, linearly` equivalent to $E$. Then $D-E+E'$ is a left-orthogonal divisor with two components $E,E'$ such that $E^2,E'^2>0$. By Proposition~\ref{prop_twopositive}, one has $D-E+E'=E+E'$ and $E^2=1$. Consequently $D=2E$.
\end{proof}

\end{document}